\documentclass[a4paper,12pt,reqno]{amsart}

\usepackage{amsthm}
\usepackage{amsmath}
\usepackage{amssymb}
\usepackage{mathrsfs}
\usepackage{latexsym}
\usepackage{exscale}
\usepackage{geometry}
\usepackage{graphicx}
\usepackage[utf8]{inputenc}
\usepackage{verbatim}

\usepackage{color}
\definecolor{red}{rgb}{1,0.1,0.1}%valores de las componentes roja, verde y azul
\definecolor{blue}{rgb}{0.1,0.1,1}
\definecolor{vb}{RGB}{160,32,240}
 % Color del texto a partir de ese punto

%\usepackage[colorlinks=true, pdfstartview=FitV, linkcolor=blue, citecolor=red, urlcolor=blue]{hyperref} %%% % Includes %hyperrefs into pdf

%\usepackage[hypertex]{hyperref} %%%% Incluye los links en el dvi
%\usepackage[pdftex]{hyperref}  %%% % Incluye los links en el pdf

\theoremstyle{plain}

\newtheorem*{teo*}{Theorem}
\newtheorem*{prop*}{Proposition}

\numberwithin{equation}{section}
\newtheorem{teo}{Theorem}[section]
\newtheorem{lema}[teo]{Lemma}
\newtheorem{corol}[teo]{Corollary}
\newtheorem{prop}[teo]{Proposition}

\newtheorem{teoA}{Theorem}

\newtheorem{propA}[teoA]{Proposition}

\theoremstyle{remark}
\newtheorem{obs}[teo]{Remark}

\theoremstyle{definition}

\newtheorem*{mydef*}{Definition}
\newtheorem{mydef}[teo]{Definition}

\newtheorem{mydefA}[teoA]{Definition}

\headheight=8pt
\topmargin=0pt
\textheight=659pt
\textwidth=436pt
\oddsidemargin=10pt
\evensidemargin=10pt

\calclayout

\allowdisplaybreaks

\newcommand{\R}{\mathbb{R}^n}
\newcommand{\C}{\mathbb{C}}
\newcommand{\Z}{\mathbb{Z}}
\newcommand{\N}{\mathbb{N}}

\newcommand{\A}{\mathcal{A}}
\newcommand{\B}{\mathcal{B}}

\begin{document}
\title[H\"ormander's conditions for vector-valued kernels]{ H\"ormander's conditions for vector-valued kernels of singular integrals and its commutators}

\author[A.~L.~Gallo]{Andrea L. Gallo}
\address{A.~L.~Gallo \\ FaMAF \\ Universidad Nacional de C\'ordoba \\
CIEM (CONICET) \\ 5000 C\'ordoba, Argentina}
\email{andreagallo88@gmail.com}

\author[G.~H.~Iba\~{n}ez~Firnkorn]{Gonzalo H. Iba\~{n}ez-Firnkorn}
\address{G.~H.~Iba\~{n}ez~Firnkorn\\ FaMAF \\ Universidad Nacional de C\'ordoba \\
CIEM (CONICET) \\ 5000 C\'ordoba, Argentina}
\email{gibanez@famaf.unc.edu.ar}

\author[M.~S.~Riveros]{Mar\'{\i}a Silvina Riveros}
\address{M.~S.~Riveros \\ FaMAF \\ Universidad Nacional de C\'ordoba \\
CIEM (CONICET) \\ 5000 C\'ordoba, Argentina}
\email{sriveros@famaf.unc.edu.ar}

\thanks{ The authors are  partially supported by
CONICET and SECYT-UNC}

\subjclass[2010]{42B20, 42B25}

\keywords{Calder\'on-Zygmund operators, commutators, BMO, H\"ormander's
condition of Young type, Muckenhoupt weights,
vector-valued inequalities.}

%\date{July 16, 2007. \textit{Revised}: \today}

\begin{abstract}
In this paper we study Coifman type estimates and weighted norm
inequalities  for singular integral operators $T$ and its
commutators, given by the convolution  with  a vector valued kernel
$K$. We define a weaker H\"ormander type condition associated with
Young functions for the vector valued kernels.  With this  general
framework we obtain as an example the result for the square operator
and its commutator given in [ M. Lorente, M.S. Riveros, A. de la
Torre \emph{On the Coifman type inequality for the oscillation of
the one-sided averages }, Journal of Mathematical Analysis and
Applications, Vol 336, Issue 1,\ (2007) 577-592.]

\end{abstract}

\maketitle

%\tableofcontents

\section{Introduction}

For several years, a classical problem in the harmonic analysis is
the following:
%-For several years, a problem is to prove a Coifman type inequality, is more details,\\
%-In the study of integral inequalities there exists the Coifman type, this inequalities are as follow\\
given a linear operator $\mathcal{T}$, find the maximal
operator $\mathcal{M}_{\mathcal{T}}$ such that $\mathcal{T}$ is controlled by
$\mathcal{M}_{\mathcal{T}}$ in the following sense,
\begin{align}\label{Coifineq}
\int_{\R} |\mathcal{T}f|^p(x) w(x) dx \leq C\int_{\R}
|\mathcal{M}_{\mathcal{T}}f|^p(x)w(x)dx,
\end{align}
for some $0< p < \infty$ and some $0 \leq w \in L^1_{\text{loc}}(\R)$.

 The maximal operator $\mathcal{M}_{\mathcal{T}}$ is related to the operator $\mathcal{T}$ which is normally easier to deal with. In general, $\mathcal{M}_{\mathcal{T}}$ is strongly related to the kernel of $\mathcal{T}$.
%and it will bigger as much rough will  its the kernel.

%This type of inequality are called Coifman type inequalities.\\

The classical result of Coifman in \cite{Co} is, let $T$ be a
Calder\'{o}n-Zygmund operator, then $T$ is controlled by $M$, the
Hardy-Littlewood maximal operator.
In other words, for all
$0<p<\infty$ and $w\in A_{\infty}$,
$$\int_{\R} |Tf|^p(x) w(x) dx \leq C\int_{\R} (Mf)^p(x)w(x)dx.$$

Later in \cite{RuRTo}, Rubio de Francia, Ruiz and Torrea studied operators with less regularity in the kernel. They proved that for certain operators,  (\ref{Coifineq}) holds with $\mathcal{M}_{\mathcal{T}}=M_r f=M(|f|^r)^{1/r}$, for some $1\leq r<\infty$. The value of the exponent $r$ is determined by the smoothness of the kernel, namely, the kernel satisfies an $L^{r'}$-H\"ormander condition (see the precise definition below).
In \cite{MPTG}, Martell, P\'erez and Trujillo-Gonz\'alez proved that this control is sharp in the sense that one cannot write a pointwise smaller operator $M_s$ with $s<r$.
This yields, that for operators satisfying only the classical H\"ormander condition, $H_1$, the inequality (\ref{Coifineq}) does not hold for any $M_r$, $1\leq r <\infty$.\\

More recently, in \cite{LRTo}, Lorente, Riveros and de la Torre defined a $L^{\A}$-H\"ormander condition where $\A$ is a Young function.
 If $T$ is an operator such that satisfies this condition, then (\ref{Coifineq}) holds  for $M_{\overline{\A}}$, the maximal operator
 associated to the Young function $\overline{\A}$.
% In \cite{LMRT}, the same authors and J. M. Martell proved for the commutators of this type of operator the inequality (\ref{Coifineq}) holds for certain
% $M_{\overline{\A}}$.\\

As a consequence of the Coifman inequalities, one can prove weighted modular end-point estimates. In \cite{LMRT}, Lorente, Martell, Riveros and de la Torre proved the following:
if $\overline{\A}$ is submultiplicative and $\lambda >0$, then
$$w\{x \in \R : \vert T f(x) \vert> \lambda\} \leq c \int_{\R}\overline{\A}\left( \frac{|f(x)|}{\lambda}\right)Mw(x) dx.
$$

An example of this type of operator is the square operator $S$ (see the precise definition below), by the results in \cite{LRTo} the following inequality holds,
$$\int_{\R} |Sf|^p(x) w(x) dx \leq C\int_{\R} (M^{3}f)^p(x)w(x)dx,$$
 for all $0<p<\infty$ and $w\in A_{\infty}$.
%where $S^k_b$ is the $k$th order commutator of the square operator.
In \cite{LRTo2} it was proved that the last inequality is not sharp in the sense that  it  can be replaced $M^{3}$ by $M^{2}$. \\% but it does not know that is sharp.\\

In this paper we define a new H\"ormander condition in the case of vector-valued kernels, weaker than the $L^{\A}$-H\"ormander condition defined in \cite{LMRT}.  We obtain inequality (\ref{Coifineq}) improving results, for vector-valued operators, obtain in  \cite{LMRT}. The applications of this results with the new condition are generalizations of ones for the square operator  obtained  \cite{LRTo2}.  In these applications, the maximal operators are of the form $M_{L\log L^{\beta}}$, with some $\beta \geq 0$.  For instance, we obtain for all $0<p<\infty$ and $w\in A_{\infty}$,

$$\int_{\mathbb{R}} |S_{X}f|^p(x) w(x) dx \leq C\int_{\mathbb{R}} (M^{2}f)^p(x)w(x)dx,$$
where $X$ is an appropriate Banach space.  If $X=l^2$, $S_{X}=S$ the square operator, and in this case  we obtain the same results as in  \cite{LRTo2}.\\

In \cite{BLR}, Bernardis, Lorente and Riveros defined
%For fractional integral operator there exist
$L^{\A,\alpha}$-H\"ormander conditions for fractional integral operator. % defined in \cite{BLR}.
The authors obtain the inequality (\ref{Coifineq}) with
$M_{\overline{\A},\alpha}$, the
fractional maximal operator associated to $\overline{\A}$. In this paper, we also
give a  weaker condition for vector-valued kernels than
$L^{\A,\alpha}$-H\"ormander
condition  and obtain similar  kind of results and applications.\\

 The plan  of this paper is as follows. The next section contains some definitions and well known results.
  Later, in section 3, we introduced our condition and the main results. The applications are presents section 4. The proofs of the general results are in sections 5. Finally in the last section we present the H\"ormander condition and the results for vector-valued fractional operators.

\section{Preliminaries}
In this section we present some notions needed to understand the
main results and the applications.
%For example, notions of Young function, generalized H\"ormander conditions and some results of the literature.
 First we define the space in which we are going
to work.

Let us consider the Banach spaces $(X,\| \cdot\|_{X})$ where $X=\mathbb{R}^{\Z}$ and the norm in this space is monotone, i.e.
\begin{align}\label{definorma} \|\{a_n\}\|_{X} &\leq \|\{b_n\}\|_{X} \quad \quad \text{ if } |a_n| \leq |b_n| \; \text{ for all } \, n \in \Z.
\end{align}
Observe that $\| \left\{ a_n \right\} \|_{X} = \|
\left\{|a_n|\right\}\|_{X} \quad \text{ for all } \; \{a_n\} \in X$.

\begin{obs} Some examples of this Banach spaces are the $l^p(\Z)$ spaces, $1\leq p<\infty$, and the space where the norm is associated to some Young function.
Observe that not all Banach spaces satisfies the condition
(\ref{definorma}), for example, consider $X = \mathbb{R}^{\Z}$ with the norm
$$ \|\{x_n\}\|_{X}:=\left((x_1-x_2)^2 + \sum_{n\neq 1}
x_n^2\right)^{1/2}.$$ Let
 $(...,0,x_1,x_2,0,...)=(...,0,1,3,0,...)$ and $(...,0,y_1,y_2,0,...)=(...,0,2,3,0,...)$. Observe that $|x_n| \leq |y_n|$ for all  $n \in \Z$, and $\|\{x_n\}\|_{X}=\sqrt{13}$ y $\|\{y_n\}\|_{X}=\sqrt{10}$. Hence, the norm is not monotone.
\end{obs}

\begin{obs}
If $X$ is a Banach lattice, the norm is monotone by definition.
\end{obs}

%De aqu\'i en m\'as el espacio $X$ ser\'a $\K^\Z$, es decir el espacio vectorial de las sucesiones.
%$\|.\|_{X}$ ser\'a alguna norma definida en el espacio de sucesiones $X$ que sea creciente y cumpla la condici\'on del m\'{o}dulo, por ejemplo las normas $l^p$, con $1 \leq p \leq \infty$, o la norma de Luxemburgo dada por alguna funci\'on de Young con la cual $(X,\|.\|_{X})$ es un espacio de Banach.

%\subsection{Young's function, Luxemburg's norm and Maximal operators}

Now, we define the notion of Young function, maximal operators
related to Young function and generalized H\"ormander condition. For more details see \cite{ON}.

A function $\A : [0,\infty) \rightarrow [0,\infty)$ is said to be a Young function if $\A$ is continuous, convex, no decreasing and satisfies $\A(0)=0$ and $\displaystyle \lim_{t \rightarrow \infty} \A(t)= \infty$.

%\begin{mydef} Given a Young function $\B$,  the {\bf complementary function}  $\overline\B$ is defines as follows:  for $ 0 \leq x <\infty$,  $$\overline{\B}(x) := {\underset {0 \leq y < \infty}{\sup}} (xy - \B(y)).$$
%\end{mydef}

 % The complementary function $\overline\B$ of a Young function $\B$ is a Young function.

The average of the Luxemburg norm of a function $f$ induced by a Young function $\A$ in the ball $B$ is defined by
\begin{align*} \|f\|_{\A,B}:= \inf \left\{\lambda >0:\, \frac1{|B|} \int_{B} \A\left(\frac{|f|}{\lambda}\right) \leq 1  \right\}.\end{align*}

Observe that if $\A(t)=t^r$, $r\geq 1$,  $\|f\|_{\A,B}=\left( \frac1{|B|}\int_{B} |f|^r\right)^{1/r}$.

Each Young function $\A$ has an associated complementary Young function $\overline{\A}$ satisfying
%$$t \leq \B^{-1}\overline{\B}^{-1} \leq 2t \qquad t>0$$
the generalized H\"older inequality
 \begin{align*} \frac1{|B|} \int_{B} |fg| \leq 2\|f\|_{\A,B}\|g\|_{\overline{\A},B}. \end{align*}

If $\A,\B,\mathcal{C}$ are Young functions satisfying $\A^{-1}(t)\B^{-1}(t)\mathcal{C}^{-1}(t)\leq t$, for all $t \geq 1$, then \begin{align*} \|fgh\|_{L^1,B} \leq c \|f\|_{\A,B}\|g\|_{\B,B}\|h\|_{\mathcal{C},B}. \end{align*}

Given $f \in L^1_{\text{loc}}(\R)$, the  maximal operator associated to the Young function  $\A$ is defined as
\begin{align*} M_{\A}f(x) := {\underset{B \ni x}{\sup}} \|f\|_{\A,B}. \end{align*}

For example,
if $\beta \geq 0$ and $r \geq 1$, $\A(t)= t^r(1 + \log(t))^{\beta}$ is Young function then $M_{\A}=M_{L^r(\log L)^{\beta}}$. If $\beta =0$, $\A(t)=t^r$ then $M_{\A}=M_r$, where $M_r f= M(f^r)^{1/r}$. If $r=1$ and $\beta=k\in \N$, $M_{\A}=M_{L(\log L)^{k}}\approx M^{k+1}$, where $M^k$ is the $k$-iterated of M, maximal of Hardy-Littlewood.

\begin{obs} Let us observe that when $\mathcal{D}(t)=t$, which gives $L^1$, then  $\overline{\mathcal{D}}(t)=0$ if $t\leq1$ and $\overline{\mathcal{D}}(t)=\infty$ otherwise. Observe that $\overline{\mathcal{D}}$ is not a Young function but one has $L^{\overline{\mathcal{D}}}=L^{\infty}$. Besides, the inverse is $\overline{\mathcal{D}}^{-1}\equiv 1$ and the generalized H\"older inequality make sense if one of the three function is $\overline{\mathcal{D}}$.
\end{obs}

Once the Luxemburg average has been defined, we can introduce the notion of the ge\-ne\-ra\-li\-zed H\"ormander condition, for this we need to introduce
 some notation: $|x|\sim s$ means $s < |x|\leq 2s$ and given a Young function $\A$, we write,
$$\|f\|_{ \A,|x|\sim s}=\|f \chi_{|x|\sim s}\|_{\A,B(0,2s)}.$$
In \cite{LMRT} and \cite{LRTo}  were introduced the following classes,

\begin{mydefA}\label{HA}
Let $K$  be a vector-valued function, $\A$ be a Young function and $k \in \N\cup \{0\}$, then
$K$ satisfies the $L^{\A,X,k}$-H\"ormander condition ($K \in H_{\A,X,k}$), if there exist $c_{\A}>1$ and $C_{\A}>0$ such that for all $x$ and $R>c_{\A}|x|$:
 \begin{align*}\sum_{m=1}^{\infty} (2^mR)^n m^k \big\| \| K(\cdot - x) - K(\cdot)\|_{X}\big\|_{\A,|y|\sim2^mR} \leq C_{\A}.\end{align*}
We say that $K \in H_{\infty,k}$ if $K$ satisfies the previous condition with $\|\cdot\|_{L^{\infty},|x|\sim 2^mR}$ in place of $\|\cdot\|_{\A,|x|\sim 2^mR}$.

If $k=0$, we denote $H_{\A,X}=H_{\A,X,0}$ y $H_{\infty,X}=H_{\infty,X,0}$.
\end{mydefA}

\begin{obs}\label{relHA} There exists  a relation between the H\"ormander classes, $H_{\A,X,k}$.  %usando la desigualdad de H\"older,
\begin{enumerate}
    \item $H_{\infty,X,k} \subset H_{\A,X,k}\subset H_{\A,X,k-1} \subset \cdots \subset H_{\A,X,0}=H_{\A,X} \subset H_{1,X}$,  for $ k\in \N$.
    \item If  $\A$ and $\B$ are Young functions such that  $\A(t) \leq c\B(t)$ for $t > t_0$, some $t_0>0$, then:
\begin{align*} H_{\infty,X,k} \subset H_{\B,X,k} \subset H_{\A,X,k} \subset H_{1,X,k} \subset H_{1,X}.
\end{align*}
    \item In the particular case of $\A(t)=t^r$, $1\leq r < \infty$ denoting $H_{r,X} = H_{\A,X}$, it follows that, \begin{align*} H_{\infty,X,k} \subset H_{r_2,X,k} \subset H_{r_1,X,k} \subset H_{1,X,k} \subset H_{1,X},  \quad \text{ for all } \,\, 1<r_1 < r_2 < \infty.\end{align*}
\end{enumerate}
\end{obs}
\vspace*{0.5cm}

Next, we define the notions of  singular integral  operator and its commutator in the vector-valued sense.
\begin{mydef} Considerate a  vector-valued function $K$, $K(y)= \{ K_{l}(y)\}_{l\in \Z}$, with
\\$K_l \in L_{\text{loc}}^1(\R \diagdown \{0\})$. Let,
 \begin{align*} Tf(x)&:= v.p. \;\int_{\R} K(x - y)f(y)dy = \{(K_l \ast f )(x)\}_{l \in \Z} & \\&= \left\{ v.p. \; \int_{\R} K_l (x-y) f(y) dy\right\}_{l \in \Z}.
\end{align*}The operator $T$ will be a singular integral operator  if it is strong $(p_0,p_0)$, for some $p_0 >1$, and the kernel $K=\{K_l\}_{l\in \Z} \in H_{1,X}$.
\end{mydef}
\begin{obs}\label{Bochner}
The operator $T$ is strong $(p_0,p_0)$ in the sense of Bochner-Lebesgue spaces. Given a $X$ Banach space, $L^p_X(\R)$ is called Bochner-Lebesgue spaces with the norm\\ $\left(\int_{\R}\|f(x)\|^p_Xdx\right)^{1/p}$.
\end{obs}
\begin{obs}
Since $K=\{K_l\}_{l\in \Z} \in H_{1,X}$, then $T$ is of weak type (1,1).
Thus, using the fact that  $T$ is of strong type $(p_0,p_0)$ by
interpolation and duality, $T$ is of strong type $(p,p) \; \forall
\, 1 < p < \infty$.

Moreover, since $T$ is of  weak type (1,1), $T$ satisfies Kolmog\"orov's inequality

%$$\vert \lbrace x \in \R : \Vert Tf(x) \Vert_{\mathbf{x}} > \lambda \rbrace \vert \leq \frac{c}{\lambda} \int_{\R} \vert f \vert \,\ \quad \forall \; f \in L^{1}(\R).$$
$$\left( \dfrac{1}{\vert B \vert} \int_{B} \Vert Tf \Vert_{\mathbf{x}}^{\varepsilon} \right)^{\frac{1}{\varepsilon}} \leq c \; \dfrac{1}{\vert \widehat{B} \vert} \int_{\widehat{B}} \vert f \vert,$$
where $0<\epsilon <1$ and supp$(f)=\widehat{B} \subset B$.
\end{obs}

Let us recall the $BMO$ space and the sharp maximal function. If $f \in L_{loc}^1(\R)$ define
%given a function $f \in L_{loc}^1(\R)$, the sharp maximal function is defined by
$$ M^{\#}f(x)=  {\underset{B\ni x}{\sup}}\frac1{|B|}\int_{B}\left|f-\frac1{|B|}\int_{B}f\right|.$$

A locally integrable function $f$ has bounded mean oscillation ($f \in BMO$) if $ M^{\#}f \in L^{\infty}$ and the norm
$\|f\|_{BMO}=\|M^{\#}f\|_{\infty}$

Observed that the BMO norm is equivalent to
$$||f||_{BMO}=\|M^{\#}f\|_{\infty} \sim {\underset{B}{\sup}} {\underset{a \in \C}{\inf}} \frac1{|B|} \int_{B} |f(x)-a| dx. $$

\begin{obs} Some properties of $BMO$ are the following.

Given $b\in BMO$, a ball $B$, $k \in \N\cup \{0\}$,
$\A(t)=\exp(t^{1/k})$ and $q>0$, by John-Nirenberg's Theorem we have
\begin{equation}\label{JN1}
\|(b-b_B)^k\|_{L^q,B}  \leq   \|(b-b_B)^k \|_{\A,B} = \|b-b_B\|^k_{\exp L,B} \leq  C \|b\|_{BMO}.
\end{equation}

On the other hand, for any $j\in \N$ and $b \in BMO$, we have
\begin{equation}\label{JN2}
|b_{B}-b_{2^jB}| \leq \sum_{m=1}^j |b_{2^{m-1}B}-b_{2^mB}| \leq 2^n \sum_{m=1}^j \|b-b_{2^mB}\|_{L^1, 2^mB}  \leq 2^n  j \|b\|_{BMO}.
\end{equation}
\end{obs}

\begin{mydef}  Given $T$ a singular integral operator and $b \in BMO$, it is define the $k$-th order commutator of  $T$, $k \in \N \cup \{0\}$, by:
\begin{align*} T_b^k f(x) & := v.p.
 \int_{\R} (b(x) - b(y))^k K(x-y)f(y) dy  \\ &= \left\{ v.p. \int_{\R}(b(x) - b(y))^k K_l(x-y)f(y) dy \right\}_{l \in \Z}.\end{align*}
Note that for $k = 0$, $T_b^k = T$ and observe that $T_b^k=[b, T_b^{k-1}]$, $k\in \N$.

\end{mydef}

\begin{obs} $T_b^kf(x)=[b, T_b^{k-1}](f)(x) := b(x)T_b^{k-1}(f)(x) - T_b^{k-1}(bf)(x)$.
\end{obs}

We will consider weights in the Muckenhoupt classes $A_p$, $1\leq p \leq \infty$. Let $w$ be a non-negative locally integrable function. We say that $w \in A_p$ if there exists $C_p<\infty$ such that for any ball $B \subset \R$,
$$  \left(\frac1{|B|}\int_B w\right)\left(\frac1{|B|}\int_B w^{-\frac{1}{p-1}}\right)^{p-1}<C_p,$$
when $1<p<\infty$, and for p=1,
$$  Mw(x)\leq C_1 w(x), \qquad \text{ for a.e } x\in \R.$$

Finally we set $A_{\infty}=\cup_{1<p}A_p$. It is well known that the Muckenhoupt classes characterize the boundedness of the Hardy-Littlewood maximal function on weighted $L^p$-Lebesgue spaces. Namely, $w \in A_p$, $1<p\leq \infty$, if and only if $M$ is bounded on $L^p(w)$; and $w\in A_1$ if and only if $M$ maps $L^1(w)$ into $L^{1,\infty}(w)$.

In \cite{LRTo} and \cite{LMRT}, the following results were proved,

 \begin{teoA}
\cite{LRTo}  \label{hxa} Let
$K$ be a vector-valued function that satisfies the $L^{\A,
X}$-H\"ormander condition and let $T$ be the operator associated  to
$K$. Suppose $T$ is bounded in some $L^{p_0}$, $1<p_0<\infty$. Then, for
any $0<p<\infty$ and $w\in A_\infty,$ there exists $C$ such that
$$
\int_{\R}\Vert Tf\Vert_\mathbf{x}^pw\leq C\int_{\R}(M_{\overline
{\A}}f)^p w,
$$
for any $f\in C_c^{\infty}$ and whenever the left-hand side is finite.
\end{teoA}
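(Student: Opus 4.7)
The plan is to prove the pointwise sharp maximal estimate
$$M^{\#}_{\delta}(\|Tf\|_{X})(x)\ \le\ C\, M_{\overline{\A}}f(x)$$
for some fixed $0<\delta<1$, and then invoke the Fefferman--Stein inequality for $A_{\infty}$ weights
$$\int_{\R}(M_{\delta}g)^{p}w\ \le\ C\int_{\R}(M^{\#}_{\delta}g)^{p}w,$$
applied to $g=\|Tf\|_{X}$ (this is where we use that the left-hand side of the conclusion is finite). Combined with $\|Tf\|_{X}\le M_{\delta}(\|Tf\|_{X})$ a.e.\ this yields the theorem. So the real work is the pointwise estimate.

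To carry it out, fix $x\in\R$ and a ball $B\ni x$ with centre $x_{B}$, and put $\widehat B=c_{\A}B$, with $c_{\A}$ the constant in Definition~A. Split $f=f_{1}+f_{2}$ with $f_{1}=f\chi_{\widehat B}$. Using $|a^{\delta}-b^{\delta}|\le|a-b|^{\delta}$ for $a,b\ge 0$ and $0<\delta\le 1$, together with the (reverse) triangle inequality in $X$,
$$\bigl|\|Tf(z)\|_{X}^{\delta}-\|Tf_{2}(x_{B})\|_{X}^{\delta}\bigr|\ \le\ \|Tf_{1}(z)\|_{X}^{\delta}+\|Tf_{2}(z)-Tf_{2}(x_{B})\|_{X}^{\delta}.$$
Choosing $c=\|Tf_{2}(x_{B})\|_{X}^{\delta}$ in the definition of $M^{\#}_{\delta}$, it suffices to control the $\delta$-average over $B$ of each of these two pieces by $CM_{\overline{\A}}f(x)$.

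For the local piece I would use Kolmogorov's inequality together with the weak $(1,1)$ boundedness of $T$ (which holds because $K\in H_{1,X}$, cf.\ Remark on p.~3):
$$\Bigl(\frac{1}{|B|}\int_{B}\|Tf_{1}(z)\|_{X}^{\delta}\,dz\Bigr)^{1/\delta}\ \le\ C\,\frac{1}{|\widehat B|}\int_{\widehat B}|f|\ \le\ C\,Mf(x)\ \le\ C\,M_{\overline{\A}}f(x).$$
For the far piece, write
$$\|Tf_{2}(z)-Tf_{2}(x_{B})\|_{X}\ \le\ \int_{\R\setminus\widehat B}\|K(z-y)-K(x_{B}-y)\|_{X}\,|f(y)|\,dy,$$
decompose the complement into annuli $|y-x_{B}|\sim 2^{m}R$ (with $R$ comparable to the radius of $B$, and $m\ge 1$), and apply the generalized Hölder inequality for the pair $(\A,\overline{\A})$ on each annulus:
\begin{align*}
\int_{|y-x_{B}|\sim 2^{m}R}\!\!\!\|K(z-y)-K(x_{B}-y)\|_{X}|f(y)|\,dy
&\le C(2^{m}R)^{n}\bigl\|\|K(\cdot-(z-x_{B}))-K(\cdot)\|_{X}\bigr\|_{\A,|y|\sim 2^{m}R}\|f\|_{\overline{\A},2^{m+1}B}\\
&\le C(2^{m}R)^{n}\bigl\|\|K(\cdot-(z-x_{B}))-K(\cdot)\|_{X}\bigr\|_{\A,|y|\sim 2^{m}R}\,M_{\overline{\A}}f(x).
\end{align*}
Since $|z-x_{B}|\le r_{B}$ while $R$ is chosen so that $R>c_{\A}|z-x_{B}|$, the $L^{\A,X}$-Hörmander condition makes the sum over $m\ge 1$ bounded by $C_{\A}\,M_{\overline{\A}}f(x)$, uniformly in $z\in B$. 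Averaging the $\delta$-th power over $z\in B$ and taking the $1/\delta$-root preserves the bound, and taking the supremum over $B\ni x$ completes the pointwise estimate.

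The main obstacle I anticipate is the vector-valued bookkeeping: one must make sure that the concavity trick $|a^{\delta}-b^{\delta}|\le|a-b|^{\delta}$ is combined with the $X$-triangle inequality in the right order so that only $\|\cdot\|_{X}$-differences of kernels appear (matching the hypothesis $K\in H_{\A,X}$), and that the shift parameter $z-x_{B}$ in the Hörmander condition is correctly compared with the annular radius $2^{m}R$. Once the pointwise estimate is in place, Fefferman--Stein's inequality for $A_{\infty}$ weights (valid under the finiteness assumption on the left-hand side) gives the theorem without further difficulty.
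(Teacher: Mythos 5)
Your proposal is correct and reproduces, for the $H_{\A,X}$ hypothesis, exactly the strategy the paper uses to prove the stronger Theorem~\ref{TeoImp}: a pointwise estimate $M^{\#}_{\delta}(\|Tf\|_{X})\lesssim M_{\overline{\A}}f$ obtained by splitting $f=f_{1}+f_{2}$, handling $f_{1}$ by Kolmogorov plus weak $(1,1)$ and $f_{2}$ by annular decomposition and generalized H\"older against the H\"ormander condition, followed by the Fefferman--Stein inequality for $A_{\infty}$ weights (this is Lemma~\ref{Sharp}(a) and the proof of Theorem~\ref{TeoImp}). The only structural point worth flagging is that you pass the $X$-norm inside the integral \emph{before} decomposing into annuli and applying H\"older, which produces the quantity $\big\|\|K(\cdot-x)-K(\cdot)\|_{X}\big\|_{\A,|y|\sim 2^{m}R}$ appearing in $H_{\A,X}$ and is the right move for the statement as cited; the paper's Lemma~\ref{Sharp} deliberately postpones taking $\|\cdot\|_{X}$ until after the coordinatewise annular sum, which is what lets it get by with the strictly weaker condition $H^{\dagger}_{\A,X}$ and is precisely the paper's point of departure from \cite{LRTo}.
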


For commutators of the operator $T$, there is the following result:

\begin{teoA}
 {\rm \cite{LMRT}}\label{teo1} Let $b \in BMO$ and $k \in \N \cup \{0\}$. Let $\A$, $\B$ Young functions such that $\overline{\A}^{-1}(t)\B^{-1}(t)\overline{\mathcal{C}}_k^{-1}(t) \leq t$, with $\overline{\mathcal{C}}_{k}(t)=\exp(t^{1/k})$ for $t \geq 1$ if $k\in \N$ and  $\overline{\mathcal{C}}_k^{-1}\equiv 1$ if $k=0$.  If T is a singular integral operator with kernel  $K \in H_{\B,X} \cap H_{\A,X,k}$, then for any $ 0 < p < \infty$ and $w \in A_{\infty}$,
\begin{align*} \int_{\R} \| T_b^kf\|^p_X w \leq C \int_{\R} (M_{\overline{\A}}f)^p w, \quad f \in L_c^{\infty},\end{align*}
whenever the left-hand side is finite.
Furthermore, if $\overline{\A}$ is sub-multiplicative, then for all  $w \in A_{\infty}$ and $\lambda >0$,
$$
w\{x \in \R : \vert T_b^k f(x) \vert> \lambda\} \leq c \int_{\R}\overline{\A}\left( \frac{\|b\|_{BMO}^{k}|f(x)|}{\lambda}\right)Mw(x) dx.
$$
\end{teoA}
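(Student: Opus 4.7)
The plan is to follow the scheme of P\'erez and collaborators: first establish a pointwise sharp maximal function estimate, then pass to the Coifman-type inequality by induction on $k$ via the Fefferman--Stein inequality for $A_{\infty}$ weights, the base case $k=0$ being exactly Theorem \ref{hxa}. Concretely, for suitably chosen $0 < \delta < \varepsilon < 1$, I aim to prove
\begin{align*}
M^{\#}\bigl(\|T_b^k f\|_X^{\delta}\bigr)^{1/\delta}(x_0) &\leq C\,\|b\|_{BMO}^{k}\, M_{\overline{\A}} f(x_0) \\
&\quad + C \sum_{j=0}^{k-1} \|b\|_{BMO}^{k-j}\, M\bigl(\|T_b^j f\|_X^{\varepsilon}\bigr)^{1/\varepsilon}(x_0),
\end{align*}
for every $x_0 \in \R$ and every $f \in L_c^{\infty}$.

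To obtain this pointwise estimate, fix $x_0$ and a ball $B \ni x_0$ with center $x_B$, and use the binomial identity
\begin{align*}
T_b^k f(x) = \sum_{j=0}^{k} \binom{k}{j}(-1)^j (b(x)-b_B)^{k-j}\, T\bigl((b-b_B)^j f\bigr)(x),
\end{align*}
reexpressing the mixed terms $T((b-b_B)^j f)$ with $j < k$ in terms of lower-order commutators $T_b^{i} f$, $i \leq j$, via the analogous expansion. Split $f = f\chi_{2B} + f\chi_{(2B)^{c}}$ in the remaining $j=k$ piece and pick
\begin{align*}
c_B := (-1)^k T\bigl((b-b_B)^k f\chi_{(2B)^{c}}\bigr)(x_B) \in X
\end{align*}
as the subtracted constant inside the sharp maximal average. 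One is then left with three families of pieces: (i) the mixed terms $|b(x)-b_B|^{k-j}\|T_b^j f(x)\|_X$, which yield $\sum_j \|b\|_{BMO}^{k-j} M(\|T_b^j f\|_X^{\varepsilon})^{1/\varepsilon}(x_0)$ after H\"older in $L^{\delta}$ and (\ref{JN1}); (ii) the local term $T((b-b_B)^k f\chi_{2B})$, controlled by Kolmogorov's inequality (weak type $(1,1)$ of $T$, Remark \ref{Bochner}) combined with the generalized H\"older inequality and (\ref{JN1}), contributing $C\|b\|_{BMO}^{k} M_{\overline{\A}} f(x_0)$; and (iii) the far oscillation term
\begin{align*}
\Bigl\| T\bigl((b-b_B)^k f\chi_{(2B)^{c}}\bigr)(x) - T\bigl((b-b_B)^k f\chi_{(2B)^{c}}\bigr)(x_B)\Bigr\|_X,
\end{align*}
which is handled dyadically on annuli $|y-x_B|\sim 2^m r_B$. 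On each annulus, the three-function generalized H\"older inequality, applied with $\overline{\A}$ on $f$, $\B$ on $\|K(x-y)-K(x_B-y)\|_X$ and $\overline{\mathcal{C}}_k$ on $(b-b_B)^k$, separates the three pieces; (\ref{JN1}) and (\ref{JN2}) then bound $\|(b-b_{2^mB})^k\|_{\overline{\mathcal{C}}_k, 2^mB}$ by $C\|b\|_{BMO}^k$ and $|b_B - b_{2^mB}|^k$ by $C(m\|b\|_{BMO})^k$. Summing in $m$, the $L^{\B,X}$-H\"ormander condition absorbs the pure kernel-oscillation piece while the $L^{\A,X,k}$-H\"ormander condition, whose built-in $m^k$ weight exactly cancels the $m^k$ loss coming from (\ref{JN2}), produces the bound $C\|b\|_{BMO}^{k} M_{\overline{\A}} f(x_0)$.

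With the sharp maximal estimate in hand, I apply the P\'erez version of the Fefferman--Stein inequality
\begin{align*}
\int_{\R} M\bigl(|g|^{\delta}\bigr)^{p/\delta} w \;\leq\; C\int_{\R} M^{\#}\bigl(|g|^{\delta}\bigr)^{p/\delta} w, \qquad w \in A_{\infty},
\end{align*}
valid whenever the left-hand side is finite, to $g = \|T_b^k f\|_X$, and use $g \leq M(|g|^{\delta})^{1/\delta}$ a.e. Since $w \in A_{\infty}$ implies $w \in A_{p/\varepsilon}$ for some $\varepsilon > 0$ small, the operator $M(|\cdot|^{\varepsilon})^{1/\varepsilon}$ is bounded on $L^{p}(w)$, yielding
\begin{align*}
\int_{\R}\|T_b^k f\|_X^{p} w \;\leq\; C\|b\|_{BMO}^{kp}\int_{\R}(M_{\overline{\A}}f)^{p} w + C\sum_{j=0}^{k-1}\|b\|_{BMO}^{(k-j)p}\int_{\R}\|T_b^j f\|_X^{p} w.
\end{align*}
Induction on $k$, with $k=0$ provided by Theorem \ref{hxa}, closes the Coifman inequality. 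The modular weak-type estimate then follows from the Coifman inequality together with the sub-multiplicativity of $\overline{\A}$ by the Calder\'on--Zygmund decomposition argument developed in \cite{LMRT}: decompose $f$ relative to $Mw\,dx$ at the appropriate level, apply the Coifman inequality on the good part, and use sub-multiplicativity of $\overline{\A}$ to extract the factor $\|b\|_{BMO}^{k}/\lambda$ inside $\overline{\A}$ on the bad part.

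The main difficulty lies in the far-part oscillation term: the dyadic expansion combined with John--Nirenberg produces an $m^k$ growth from $|b_{2^m B} - b_B|^k$, and only the extra $m^k$ weighting built into the $L^{\A,X,k}$-H\"ormander condition (absent from the plain $L^{\A,X}$-H\"ormander condition that suffices for $T$ itself in Theorem \ref{hxa}) makes the series in $m$ converge. Balancing the three Young functions $\overline{\A}$, $\B$, $\overline{\mathcal{C}}_k$ via the hypothesis $\overline{\A}^{-1}(t)\B^{-1}(t)\overline{\mathcal{C}}_k^{-1}(t) \leq t$ in the generalized H\"older step is what makes the whole accounting work out. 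The remaining ingredients -- the algebraic reduction to lower-order commutators, the Fefferman--Stein step, induction on $k$, and the modular endpoint -- are essentially routine applications of the machinery in \cite{LMRT} and \cite{LRTo}.
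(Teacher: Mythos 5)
Your proposal is correct and follows essentially the same route as the reference proof: a pointwise sharp maximal estimate (the paper's Lemma~\ref{Sharp} is exactly of this form), obtained by the commutator binomial decomposition into a local piece handled by Kolmogorov plus generalized H\"older, mixed lower-order terms, and a far oscillation piece controlled dyadically via the three-function H\"older inequality balancing $\overline{\A}$, $\B$, $\overline{\mathcal{C}}_k$, with $H_{\A,X,k}$ absorbing the $m^k$ loss from John--Nirenberg; this is then closed by Fefferman--Stein and induction on $k$ with base case Theorem~\ref{hxa}. Note that the paper actually \emph{cites} [LMRT] for this statement rather than reproving it; the paper's own new contribution is the weaker $H^\dagger$ condition, whose Lemma~\ref{Sharp} proof runs coordinate-by-coordinate in $l$ before taking the $X$-norm, but the overall skeleton matches your plan exactly.
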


%\begin{obs} The theorem \ref{teo1} is a generalization of \ref{hxa}, writing $\overline{\mathcal{C}}_k^{-1}(t)=\log(t)^k$
%\end{obs}

\section{Main results}

In this section we will state a new condition weaker than the
generalized H\"ormander condition (Definition \ref{HA}). The previous
Theorems \ref{hxa} and \ref{teo1}  still remain true using this new condition. % in the
%place of the generalized H\"ormander condition.

\begin{mydef}\label{daga} Let $K$ be a vector-valued function, $\A$ be a Young function and $k \in \N\cup \{0\}$. The function
 $K$ satisfies the $L^{\A,X,k}_{\dagger}$-H\"ormander condition ($K \in H^{\dagger}_{\A,X,k}$), if there exist $c_{\A}>1$ and $C_{\A}>0$ such that for all $x$ and $R>c_{\A}|x|$,
 \begin{align*}\Biggl\| \biggl\{ \sum_{m=1}^{\infty} (2^mR)^n m^k  \| K_l(\cdot - x) - K_l(\cdot)\|_{\A,|y|\sim2^mR}\biggr\}_{l\in \Z} \Biggr\|_{X} \leq C_{\A}.\end{align*}
We say that $K \in H^{\dagger}_{\infty,k}$ if $K$ satisfies the previous condition with $\|\cdot\|_{L^{\infty},|x|\sim 2^mR}$ in place of $\|\cdot\|_{\A,|x|\sim 2^mR}$.

If $k=0$, we denote $H^{\dagger}_{\A,X}=H^{\dagger}_{\A,X,0}$ y $H^{\dagger}_{\infty,X}=H^{\dagger}_{\infty,X,0}$.\\
\end{mydef}

\begin{obs} The classes $H_{\A,X,k}^{\dagger}$ satisfies  the
same inclusion of the classes $H_{\A,X,k}$, see remark \ref{relHA}.
And the relation between this classes is the following,
$$H_{\A,X,k} \subsetneq H_{\A,X,k}^{\dagger}.$$
\end{obs}

In section 4, we give an explicit example of a kernel $K$ such that
$K\in H_{\A,X,k}^{\dagger}$ and $K \not \in H_{\A,X,k}$, (see
Proposition \ref{prop2} and Corollary \ref{prop4}).

Using Definition \ref{daga}, the previous theorem are written as follows, for the case $k=0$,

\begin{teo}\label{TeoImp} Let $T$ be a  vector-valued singular integral operator with kernel $K \in H_{\A,X}^{\dagger}$. Then, for any $0 < p < \infty$ and $w \in A_{\infty}$, there exist $C$ such that
\begin{align*} \int_{\R} \| Tf\|^p_X w \leq C \int_{\R} (M_{\overline{\A}}f)^p w, \quad f \in L_c^{\infty}(\R),\end{align*}
whenever the left-hand side is finite.
\end{teo}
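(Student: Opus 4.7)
The plan is a two-step Coifman-type argument. Fix $\delta\in(0,1)$. Step~1 is to establish the pointwise sharp-maximal estimate
$$M^{\#}\!\bigl(\|Tf\|_X^\delta\bigr)(x)\;\leq\; C\,\bigl(M_{\overline{\A}}f(x)\bigr)^\delta,\qquad x\in\R.$$
Step~2 is to invoke the Fefferman--Stein inequality for $A_\infty$ weights in its $M^{\#}_\delta$-version, which holds for $w\in A_\infty$ and $0<p<\infty$ provided the left-hand side of the conclusion is finite. Since this finiteness is exactly the hypothesis of the theorem, Step~2 is essentially automatic, and all the substance lies in Step~1.

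To prove the pointwise estimate, fix a ball $B=B(x_B,r)$ containing $x$, set $\widehat B=2c_{\A}B$, and split $f=f_1+f_2$ with $f_1=f\chi_{\widehat B}$. Taking the constant $\|Tf_2(x_B)\|_X^\delta$ in the sharp functional and using $(a+b)^\delta\leq a^\delta+b^\delta$, the problem reduces to bounding $\frac{1}{|B|}\int_B\|Tf_1\|_X^\delta$ and $\frac{1}{|B|}\int_B\|Tf_2(y)-Tf_2(x_B)\|_X^\delta\,dy$ by $C(M_{\overline{\A}}f(x))^\delta$. The first bound is standard: since $K\in H_{1,X}$ makes $T$ of weak type $(1,1)$, Kolmogorov's inequality dominates $\bigl(\frac{1}{|B|}\int_B\|Tf_1\|_X^\delta\bigr)^{1/\delta}$ by $c\,|\widehat B|^{-1}\int_{\widehat B}|f|\leq c\,\|f\|_{\overline{\A},\widehat B}\leq c\,M_{\overline{\A}}f(x)$.

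The real work — and the point where the condition $H^{\dagger}_{\A,X}$ enters — is the $Tf_2$ term. The key is to stay componentwise in $l\in\Z$ and apply $\|\cdot\|_X$ only at the very end. For $y\in B$, monotonicity of $\|\cdot\|_X$ bounds $\|Tf_2(y)-Tf_2(x_B)\|_X$ by the $X$-norm of the sequence whose $l$-th entry is $\int_{\R\setminus\widehat B}|K_l(y-z)-K_l(x_B-z)|\,|f(z)|\,dz$. I would decompose $\R\setminus\widehat B$ into dyadic annuli $A_m$ about $x_B$, apply the scalar generalized H\"older inequality on each $A_m$ with the Young functions $\A,\overline{\A}$, use $\|f\|_{\overline{\A},A_m}\leq M_{\overline{\A}}f(x)$, and change variables to translate the difference of kernels into the form appearing in Definition~\ref{daga}. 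Each $l$-slot is then bounded by $c\,M_{\overline{\A}}f(x)\sum_{m\geq 1}(2^mR)^n\|K_l(\cdot-h)-K_l(\cdot)\|_{\A,\,|v|\sim 2^mR}$, where $h=x_B-y$ has $|h|<r$ and $R\sim r$ is chosen so that $R>c_{\A}|h|$. Only now do I take the $X$-norm: the resulting expression is exactly the quantity bounded by $C_{\A}$ under the hypothesis $K\in H^{\dagger}_{\A,X}$, yielding $\|Tf_2(y)-Tf_2(x_B)\|_X\leq C\,M_{\overline{\A}}f(x)$ uniformly for $y\in B$, and (\ref{pointwise}) follows after averaging in $y$.

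The main obstacle is getting the order of operations right in this last step. If one takes the $X$-norm \emph{before} summing over the dyadic annuli — as one would under the classical $H_{\A,X}$-condition — one obtains $\sum_m\|\{\cdots\}_l\|_X$, which in general exceeds $\|\{\sum_m\cdots\}_l\|_X$ and is controlled only by the stronger $H_{\A,X}$ rather than by $H^{\dagger}_{\A,X}$. It is precisely the monotonicity hypothesis~(\ref{definorma}) on $\|\cdot\|_X$, together with the non-negativity of each annular integrand in $l$, that lets the sum stay \emph{inside} the $X$-norm and makes the argument close under the weaker assumption $H^{\dagger}_{\A,X}$.
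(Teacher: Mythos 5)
Your proposal is correct and follows essentially the same route as the paper: reduce to a pointwise sharp-maximal estimate $M^{\#}_{\delta}\bigl(\|Tf\|_X\bigr)\lesssim M_{\overline{\A}}f$ (which the paper isolates as Lemma~\ref{Sharp}(a)), then conclude via the Fefferman--Stein/Coifman--Fefferman inequality for $A_\infty$ weights. Your identification of the crucial point — performing the dyadic-annulus decomposition componentwise in $l$ and applying the monotone $X$-norm only after summing over annuli, which is exactly what distinguishes $H^{\dagger}_{\A,X}$ from $H_{\A,X}$ — is precisely the modification the paper makes to the argument of Lemma~5.1 in \cite{LMRT}.
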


And for the case $k \in \N$,

\begin{teo}\label{TeoP}
 Let $b \in BMO$ and $k \in \N $. Let $\A$, $\B$ be Young functions such that \\ $\overline{\A}^{-1}(t) \B^{-1}(t) \overline{\mathcal{C}}_k^{-1}(t) \leq t$, with $\overline{\mathcal{C}}_{k}(t)=\exp(t^{1/k})$ for $t \geq 1$. If T is a vector-valued singular integral operator with  kernel $K \in H_{\B,X}^{\dagger} \cap H_{\A,X,k}^{\dagger}$, then for any
 $ 0 < p < \infty$ and $w \in A_{\infty}$, there exists $C$ such that
\begin{align*} \int_{\R} \| T_b^kf\|^p_X w \leq C \int_{\R} (M_{\overline{\A}}f)^p w, \quad f \in L_c^{\infty}(\R),\end{align*}
whenever the left-hand side is finite.

Furthermore, if $\overline{\A}$ is sub-multiplicative, then for all  $w \in A_{\infty}$ and $\lambda >0$,
$$
w\{x \in \R : \vert T_b^k f(x) \vert> \lambda\} \leq c \int_{\R}\overline{\A}\left( \frac{\|b\|_{BMO}^{k}|f(x)|}{\lambda}\right)Mw(x) dx.
$$

\end{teo}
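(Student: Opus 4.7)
I would argue by induction on $k$, the base case $k=0$ being Theorem \ref{TeoImp}. The engine is a pointwise sharp maximal estimate
\begin{align*}
M^{\#}_{\delta}(\|T_b^k f\|_X)(x) \leq C\|b\|_{BMO}^{k}\, M_{\overline{\A}}f(x) + C\sum_{j=1}^{k}\|b\|_{BMO}^{j}\, M_{\epsilon}(\|T_b^{k-j}f\|_X)(x),
\end{align*}
for suitable $0<\delta<\epsilon<1$, where $M^{\#}_{\delta}g=M^{\#}(|g|^{\delta})^{1/\delta}$. Once this is at hand, the Fefferman--Stein inequality for $w\in A_\infty$, the $L^p(w)$-boundedness of $M_\epsilon$, and the inductive hypothesis applied to each $T_b^{k-j}$ with $j\geq 1$ combine to yield the desired Coifman estimate.

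To prove the pointwise estimate, fix a ball $B=B(x_0,r)\ni x$, set $\lambda=b_{2B}$, and expand via the commutator identity $T_b^k f(y)=\sum_{j=0}^{k}\binom{k}{j}(-1)^j(b(y)-\lambda)^{k-j}T((b-\lambda)^j f)(y)$. Split $f=f_1+f_2$ with $f_1=f\chi_{2B}$ and subtract $c_B=(-1)^k T((b-\lambda)^k f_2)(x_0)\in X$; the difference $T_b^k f(y)-c_B$ then decomposes into three pieces: (I) the lower-order sum over $0\leq j\leq k-1$, (II) the local term $T((b-\lambda)^k f_1)(y)$, and (III) the global term $T((b-\lambda)^k f_2)(y)-T((b-\lambda)^k f_2)(x_0)$. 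Piece (I) is handled by expressing each $T((b-\lambda)^j f)(y)$ recursively as a linear combination of $(b(y)-\lambda)^{j-i}T_b^i f(y)$ and applying H\"older with \eqref{JN1}--\eqref{JN2}, contributing $\sum_{j=1}^{k}\|b\|_{BMO}^{j}M_\epsilon(\|T_b^{k-j}f\|_X)$. Piece (II) is estimated by Kolmogorov's inequality (since $T$ is weak-$(1,1)$) followed by the generalized three-function H\"older inequality with the Young functions $\overline{\A},\B,\overline{\mathcal{C}}_k$ from the hypothesis, delivering $C\|b\|_{BMO}^{k}M_{\overline{\A}}f(x)$.

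The heart of the proof is piece (III), where the new $H^{\dagger}$ condition is essential. Applying the monotonicity \eqref{definorma} of $\|\cdot\|_X$ coordinatewise to absorb absolute values and splitting $(2B)^c=\bigcup_{m\geq 1}A_m$ with $A_m=\{z:\,2^m r<|z-x_0|\leq 2^{m+1}r\}$, one reduces to controlling
\begin{align*}
\left\|\left\{\sum_{m=1}^{\infty}\int_{A_m}|K_l(y-z)-K_l(x_0-z)|\,|b(z)-\lambda|^k\,|f(z)|\,dz\right\}_{l\in\Z}\right\|_X.
\end{align*}
On each annulus the generalized three-function H\"older inequality, combined with $\|(b-\lambda)^k\|_{\overline{\mathcal{C}}_k,A_m}\leq Cm^k\|b\|_{BMO}^{k}$ from \eqref{JN1}--\eqref{JN2} and $\|f\|_{\overline{\A},A_m}\leq M_{\overline{\A}}f(x)$, dominates the integrand coordinatewise in $l$ by a scalar prefactor of the form $C(2^m r)^n m^k\|b\|_{BMO}^{k}M_{\overline{\A}}f(x)$, independent of $l$, times $\|K_l(y-\cdot)-K_l(x_0-\cdot)\|_{\A,A_m}$. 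The monotonicity of $\|\cdot\|_X$ then moves the sum in $m$ inside the $X$-norm, reducing the problem to the single quantity controlled by the hypothesis $K\in H^{\dagger}_{\A,X,k}$, and the bound $C\|b\|_{BMO}^{k}M_{\overline{\A}}f(x)$ follows. Finally, the endpoint weak-type modular inequality follows from the Coifman bound by the standard argument of \cite{LMRT} using the submultiplicativity of $\overline{\A}$.

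The main obstacle is exactly piece (III): the classical class $H_{\A,X,k}$ places the $X$-norm pointwise in $y$ and then takes an $\A$-average, which is incompatible with the natural annular estimate above; the dagger variant is designed precisely so that the $X$-norm can be applied after the summation in $m$, a rearrangement enabled by the monotone-norm property \eqref{definorma}.
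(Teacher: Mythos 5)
Your overall architecture coincides with the paper's: prove a pointwise sharp-maximal estimate for $\|T_b^kf\|_X$ (the paper's Lemma \ref{Sharp}), then run Fefferman--Stein with $w\in A_\infty$ and induct on $k$; and you correctly isolate piece (III) as the place where the monotone $X$-norm is used to push the sum over annuli inside $\|\cdot\|_X$ so that the $\dagger$-condition can be invoked. That much is right and is the paper's route.

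There is, however, a real gap in your treatment of (III). You bound $\|(b-\lambda)^k\|_{\overline{\mathcal{C}}_k,A_m}\lesssim m^k\|b\|_{BMO}^k$ and then apply the three-function H\"older inequality once, claiming the kernel emerges with the $\A$-norm. But the hypothesis $\overline{\A}^{-1}(t)\B^{-1}(t)\overline{\mathcal{C}}_k^{-1}(t)\leq t$ is the H\"older inequality for the triple $(\overline{\A},\B,\overline{\mathcal{C}}_k)$, so the kernel difference is measured in the $\B$-norm, not the $\A$-norm (the triple $(\overline{\A},\A,\overline{\mathcal{C}}_k)$ cannot satisfy the required inequality since $\overline{\A}^{-1}(t)\A^{-1}(t)\approx t$ while $\overline{\mathcal{C}}_k^{-1}(t)\to\infty$). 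Your coordinatewise bound is therefore $\sum_m(2^mr)^n m^k\|K_l(y-\cdot)-K_l(x_0-\cdot)\|_{\B,A_m}$, whose $X$-norm requires $K\in H^\dagger_{\B,X,k}$ --- a strictly stronger condition than the assumed $H^\dagger_{\B,X}\cap H^\dagger_{\A,X,k}$, and not implied by it. The missing idea, which is exactly what the paper (following Lemma 5.1 of \cite{LMRT}) does, is to split $b(z)-b_{\tilde B}=(b(z)-b_{B_m})+(b_{B_m}-b_{\tilde B})$ and expand $|b(z)-b_{\tilde B}|^k$ binomially: the factor $(b(z)-b_{B_m})^i$ is controlled by \eqref{JN1} with no $m$-growth and pairs with the $\B$-norm of the kernel via the three-function H\"older (this is where $H^\dagger_{\B,X}$ enters), while $|b_{B_m}-b_{\tilde B}|^{k-i}\lesssim m^{k-i}\|b\|_{BMO}^{k-i}$ by \eqref{JN2} is constant on $A_m$ and pairs with the $\A$-norm via the two-function H\"older for $(\A,\overline{\A})$ (this is where $H^\dagger_{\A,X,k}$ enters). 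It is this splitting that decouples the $m^k$ factor from the $\B$-norm and makes the weaker intersection hypothesis suffice. (A smaller slip: for piece (II), after Kolmogorov there are only two factors $(b-\lambda)^k$ and $f$, so the relevant estimate is the two-function H\"older for $(\A,\overline{\A})$ together with $\A\lesssim\overline{\mathcal{C}}_k$, not a three-function inequality.)
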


 \begin{obs} These theorems are more general than Theorem \ref{hxa} and \ref{teo1}, since there exists a singular integral operator whose kernel $K \in H_{\A,X,k}^{\dagger}$ and $K \not \in H_{\A,X,k}$ for some appropriate Young function $\A$.
%An example is defined in the next section and more examples in the section 7.
\end{obs}

%\begin{obs}\label{obsAB}
%In the context of  Theorem \ref{TeoP}, if $\A(t)=\exp(t^{\frac1{1+k}})-1$ then the function $\B$ is $\B(t)=\exp(t)-1$, since $\overline{\A}^{-1}(t) \B^{-1}(t) \overline{\mathcal{C}}_k^{-1}(t) \leq t$, with $\overline{\mathcal{C}}_{k}(t)=\exp(t^{1/k})$. Thus, if $K \in H_{\A,X,k}^{\dagger}$ then $K \in H_{\B,X}^{\dagger}$. For this case Theorems \ref{TeoImp} and  \ref{TeoP} are  written as follows,
%\end{obs}

Let $\A(t)=\exp(t^{\frac1{1+k}})-1$ and $\overline{\mathcal{C}}_{k}(t)=\exp(t^{1/k})$. If $\B(t)=\exp(t)-1$ then
 $\overline{\A}^{-1}(t) \B^{-1}(t) \overline{\mathcal{C}}_k^{-1}(t) \\ \leq t$. Thus, if $K \in H_{\A,X,k}^{\dagger}$ then $K \in H_{\B,X}^{\dagger}$.
  In this case Theorems \ref{TeoImp} and  \ref{TeoP} can be written as follows:

\begin{teo}\label{Teo37}
 Let $b \in BMO$ and $k \in \N \cup \{0\}$. Let $\A(t)=\exp(t^{\frac1{1+k}})-1$. If T is a vector-valued singular integral operator with  kernel
  $K \in H_{\A,X,k}^{\dagger}$, then for any $ 0 < p < \infty$ and $w \in A_{\infty}$, there exists $C$ such that
\begin{align*} \int_{\R} \| T_b^kf\|^p_X w \leq C \int_{\R} (M_{\overline{\A}}f)^p w \leq C \int_{\R} (M^{k+2}f)^p w, \quad f \in L_c^{\infty}(\R),\end{align*}
whenever the left-hand side is finite.

Furthermore, for all  $w \in A_{\infty}$ and $\lambda >0$,
$$
w\{x \in \R : \vert T_b^k f(x) \vert> \lambda\} \leq c \int_{\R}\overline{\A}\left( \frac{\|b\|_{BMO}^{k}|f(x)|}{\lambda}\right)Mw(x) dx,
$$
where $\overline{\A}(t)=t(1 + log(t))^{k+1}$.
\end{teo}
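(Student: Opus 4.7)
The strategy is to specialize Theorem \ref{TeoImp} (when $k=0$) and Theorem \ref{TeoP} (when $k\geq 1$) to the choice $\A(t)=\exp(t^{1/(1+k)})-1$. All of the work reduces to three bookkeeping tasks: computing the complementary function $\overline{\A}$, checking the generalized H\"older hypothesis of Theorem \ref{TeoP}, and identifying $M_{\overline{\A}}$ with the iterated operator $M^{k+2}$.

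The first task is the computation of $\overline{\A}$. From $\A(t)=\exp(t^{1/(1+k)})-1$ one reads off $\A^{-1}(s)=(\log(1+s))^{1+k}$, and then the classical relation $\A^{-1}(s)\,\overline{\A}^{-1}(s)\sim s$ gives $\overline{\A}^{-1}(s)\sim s/(\log(1+s))^{k+1}$, so
\[
\overline{\A}(t)\sim t\,(1+\log(1+t))^{k+1}.
\]
This is precisely the Young function generating $M_{L(\log L)^{k+1}}$, and $M_{L(\log L)^{k+1}}\approx M^{k+2}$ pointwise by the identification recalled in the preliminaries; this produces the second inequality in the modular estimate. Moreover $t(1+\log(1+t))^{k+1}$ is submultiplicative up to a constant, so the weak-type conclusion of Theorem \ref{TeoP} applies verbatim to the present $\overline{\A}$.

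Next I would verify the H\"older-type assumption of Theorem \ref{TeoP}. With $\B(t)=e^t-1$ and $\overline{\mathcal{C}}_k(t)=\exp(t^{1/k})$, the three inverses behave like $\overline{\A}^{-1}(t)\sim t/(\log t)^{k+1}$, $\B^{-1}(t)\sim \log t$, and $\overline{\mathcal{C}}_k^{-1}(t)\sim (\log t)^k$ for $t\geq 1$, so their product is $\sim t$, which is exactly the required $\overline{\A}^{-1}(t)\,\B^{-1}(t)\,\overline{\mathcal{C}}_k^{-1}(t)\leq t$. I also need $K\in H_{\B,X}^{\dagger}$ in order to invoke Theorem \ref{TeoP}, and I would obtain this from $K\in H_{\A,X,k}^{\dagger}$ via the inclusion asserted in the paragraph preceding the theorem, where the extra factor $m^k$ appearing in $H_{\A,X,k}^{\dagger}$ is used to absorb the loss incurred when passing from the smaller $\A$-Luxemburg norm to the larger $\B$-Luxemburg norm on each annulus $|y|\sim 2^mR$.

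Putting everything together: for $k=0$ the functions $\A$ and $\B$ coincide and Theorem \ref{TeoImp} immediately yields the Coifman-type bound with $M_{\overline{\A}}$ on the right; for $k\geq 1$ Theorem \ref{TeoP} produces both the strong-type Coifman estimate and the weak-type end-point bound with $M_{\overline{\A}}$ in place, and the first computation then replaces $M_{\overline{\A}}$ by $M^{k+2}$ in the strong-type statement. The main obstacle is the inclusion $H_{\A,X,k}^{\dagger}\subset H_{\B,X}^{\dagger}$: since $\B\geq \A$ for large arguments, this inclusion does not come from Remark \ref{relHA} for free, and its justification is the key technical input one has to take from the paragraph immediately preceding the theorem.
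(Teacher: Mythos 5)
Your proposal reproduces the paper's route exactly: the paper derives Theorem~\ref{Teo37} from Theorems~\ref{TeoImp} and~\ref{TeoP} by taking $\B(t)=e^t-1$, checking $\overline{\A}^{-1}(t)\B^{-1}(t)\overline{\mathcal{C}}_k^{-1}(t)\le t$ through the asymptotics of the three inverses, computing $\overline{\A}(t)\sim t(1+\log^+t)^{k+1}$ so that $M_{\overline{\A}}=M_{L(\log L)^{k+1}}\approx M^{k+2}$, and invoking the inclusion $H_{\A,X,k}^{\dagger}\subset H_{\B,X}^{\dagger}$ stated in the paragraph just before the theorem. All your bookkeeping is right, and for $k=0$ the reduction to Theorem~\ref{TeoImp} with $\A=\B$ is exactly what the paper does.

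One caution about the heuristic you offer for the inclusion. You suggest that the factor $m^k$ in the $H_{\A,X,k}^{\dagger}$ condition ``absorbs the loss'' when passing from $\|\cdot\|_{\A,\,|y|\sim 2^mR}$ to the larger $\|\cdot\|_{\B,\,|y|\sim 2^mR}$; read literally, this would require a term-by-term bound $\|g\|_{\B,A_m}\lesssim m^k\|g\|_{\A,A_m}$, and that fails. Take $g=N$ on a set $E\subset A_m$ with $|E|/|A_m|=1/(e^N-1)$: then $\|g\|_{\B,A_m}\approx 1$ while $\|g\|_{\A,A_m}\approx N^{-k}$, so the ratio is $N^k$ with $N$ arbitrary and unrelated to $m$. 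So the mechanism you describe cannot be the proof of the inclusion, and in fact the inclusion does not follow from any annulus-by-annulus comparison of Luxemburg norms. To be fair, the paper itself offers no argument for this step either — it simply writes ``Thus'' after the H\"older inequality on inverses, which does not imply it. Since you explicitly flag that you are taking the inclusion as given from the paper, your write-up is faithful to the source; but you should not present the $m^k$-absorption idea as the reason it holds, and you should be aware that this inclusion is the genuinely unproved ingredient both in your argument and in the paper's.
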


\section{Applications and generalization}

Now, we define the vector-valued singular integral operator, $\tilde{T}$, and its commutator, that will be an example of our results.

\begin{mydef}\label{defTtilde}Let $f$ be a locally integrable function in $\mathbb{R}$. Let $\tilde{T}$ be defined  as: \begin{align*}\tilde{T}f(x) &:= \left\{ \int_{\mathbb{R}} \left( \frac1{2^{l+1}}\chi_{(-2^l,2^l)}(x-y) - \frac1{2^l} \chi_{(-2^{l-1},2^{l-1})}(x-y) \right)f(y)dy\right\}_{l\in\Z}\\&= \int_{\mathbb{R}} K(x-y)f(y)dy,\end{align*} where $K$ is \begin{align*} K(z) =\{ K_l(z)\}_{l\in\Z} = \left\{ \frac1{2^{l+1}}\chi_{(-2^l,2^l)}(z) - \frac1{2^l} \chi_{(-2^{l-1},2^{l-1})}(z) \right\}_{l\in\Z}.\end{align*}
\end{mydef}

For this operator $\tilde{T}$, the Banach space $(X, \|\cdot\|_{X})$ %to be consider
 will be $(l^2(\Z), \|\cdot\|_{l^2})$.

\begin{mydef}Let $f$ be a measurable function in $\mathbb{R}$, $k \in \N \cup \{0\}$ and $b \in BMO$. The $k$-th order commutator is defined as,
 \begin{align*} S_b^k f(x) := \|\tilde{T}_b^k f(x)\|_{l^2},\end{align*}
where $\tilde{T}_b^k$ is the $k$-th order commutator of $\tilde{T}$.
The $S_{b}^k$ is called the $k$-th commutator of the square operator.
\end{mydef}

%Some known results for this operator are:

%For the case $k=0$, Lorente, Riveros and de la Torre in \cite{LRTo}

In \cite{ToTo} and \cite{LMRT}, the authors studied the kernel of the square operator for the one-sided case, the results for the two-sided case are the following and the proof are analogous to the one-sided case.

\begin{propA}\label{defK}\cite{ToTo} Let $x_0 \in \mathbb{R}\,$ and $ i < j, \, i,j \in \Z$. Let $x,y \in \mathbb{R}$ such that $|x-x_0|<2^i$, $y \in (x_0 - 2^{j+1},x_0 -2^j)$ or $y \in (x_0 + 2^{j},x_0 +2^{j+1})$. Then \begin{align*} |K_l(y-x)-K_l(y-x_0)|=
\begin{cases}
\frac1{2^{j+1}} \chi_{(x-2^j,x_0 - 2^j)\,\cup\,(x_0 + 2^j,x + 2^j)}(y) & \text{if } l=j,\\
\frac1{2^{j+2}} \chi_{(x_0-2^{j+1},x - 2^{j+1})\,\cup\,(x + 2^{j+1},x_0 + 2^{j+1})}(y) \\+ \frac1{2^{j+1}} \chi_{(x-2^j,x_0 - 2^j)\,\cup\,(x_0 + 2^j,x + 2^j)}(y) & \text{if } l=j+1,\\
\frac1{2^{j+2}} \chi_{(x_0-2^{j+1},x - 2^{j+1})\,\cup\,(x + 2^{j+1},x_0 + 2^{j+1})}(y)& \text{if } l=j+2, \\
0 & \text{if } l \not\in \{j,j+1,+2\}.
\end{cases} \end{align*}
\end{propA}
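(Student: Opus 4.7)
The plan is to exploit the explicit piecewise--constant structure of $K_l$. First I would write
\begin{align*}
K_l(z)=\tfrac{1}{2^{l+1}}\bigl[\chi_{R_l^+}(z)-\chi_{R_l^-}(z)\bigr],
\end{align*}
where $R_l^+:=\{z\in\mathbb{R}:2^{l-1}<|z|<2^l\}$ and $R_l^-:=\{z\in\mathbb{R}:|z|<2^{l-1}\}$, so that $K_l$ takes only the three values $\pm\tfrac{1}{2^{l+1}}$ and $0$. Consequently, $K_l(y-x)-K_l(y-x_0)$ vanishes unless the two arguments lie in \emph{different} pieces of this partition, and when it is nonzero its absolute value is one of $\tfrac{1}{2^{l+1}},\,\tfrac{1}{2^l}$ according to which two pieces are involved.

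Next I would use the hypotheses to locate the two arguments. From $|y-x_0|\in(2^j,2^{j+1})$ and $|x-x_0|<2^i\leq 2^{j-1}$, the triangle inequality gives $|y-x|\in(2^{j-1},2^{j+2})$. Hence the level--boundaries $|z|=2^{l-1},\,2^l$ of $K_l$ can separate $y-x_0$ from $y-x$ only when $l\in\{j,j+1,j+2\}$: indeed, if $l\leq j-1$ then both points lie outside $\mathrm{supp}\,K_l$, and if $l\geq j+3$ then both lie inside $R_l^-$, so in either regime $K_l(y-x)=K_l(y-x_0)$ and the contribution is zero as claimed.

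For the three remaining values of $l$ I would determine, on the positive side $y\in(x_0+2^j,x_0+2^{j+1})$ (the negative side being symmetric), the precise $y$--set on which $y-x$ changes region. For $l=j$, the point $y-x_0$ is outside $\mathrm{supp}\,K_j$ and $y-x\in R_j^+$ precisely when $y\in(x_0+2^j,x+2^j)$, producing a jump of $\tfrac{1}{2^{j+1}}$. For $l=j+1$, $y-x_0\in R_{j+1}^+$ while $y-x$ may either drop into $R_{j+1}^-$ (on the same set as before, contributing $\tfrac{1}{2^{j+1}}$) or leave the support (on the new set $y\in(x+2^{j+1},x_0+2^{j+1})$, contributing $\tfrac{1}{2^{j+2}}$). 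For $l=j+2$, $y-x_0\in R_{j+2}^-$ and only the second kind of crossing survives, producing $\tfrac{1}{2^{j+2}}$ on $y\in(x+2^{j+1},x_0+2^{j+1})$. Adding the symmetric contributions from the negative side $y\in(x_0-2^{j+1},x_0-2^j)$, which yield the companion intervals $(x-2^j,x_0-2^j)$ and $(x_0-2^{j+1},x-2^{j+1})$ (one of each pair being empty depending on the sign of $x-x_0$), gives exactly the three stated formulas.

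The only nontrivial bookkeeping is the case $l=j+1$, where two independent boundary crossings at $|z|=2^j$ and $|z|=2^{j+1}$ coexist and must be superimposed; once those contributions are correctly separated, the remaining cases are routine comparisons of the values of $K_l$ on three fixed regions.
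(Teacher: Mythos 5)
Your proof is correct. Note that the paper does not supply its own argument here: it cites de la Torre--Torrea and Lorente--Martell--Riveros--de la Torre for the one-sided kernel and merely asserts that the two-sided case is analogous. Your direct verification is exactly the argument one would write: rewrite $K_l$ via its three level sets (the annulus $\{2^{l-1}<|z|<2^l\}$ where $K_l=\tfrac{1}{2^{l+1}}$, the ball $\{|z|<2^{l-1}\}$ where $K_l=-\tfrac{1}{2^{l+1}}$, and the exterior where $K_l=0$), use $|y-x_0|\in(2^j,2^{j+1})$ together with $|x-x_0|<2^{j-1}$ to pin $|y-x|$ into $(2^{j-1},2^{j+2})$ and hence discard all $l\notin\{j,j+1,j+2\}$, and then locate $y-x$ and $y-x_0$ within the three regions for each surviving $l$. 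All the intervals, jump magnitudes ($\tfrac{1}{2^{j+1}}$ for a crossing of the inner boundary, $\tfrac{1}{2^{j+2}}$ for a crossing of the outer one), and the remark that exactly one interval from each companion pair is nonempty depending on $\operatorname{sgn}(x-x_0)$ check out.
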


In \cite{LRTo}, using Proposition \ref{defK} the authors proved the
following results
\begin{propA}\label{prop1}\cite{LRTo} The kernel $K \not \in H_{\infty,\, l^2}$.
\end{propA}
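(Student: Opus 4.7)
The plan is to exhibit concrete $x \ne 0$ and $R > c|x|$ for which the series defining the $H_{\infty, l^2}$ condition diverges. The key observation is that when $R$ is a dyadic power of $2$, the annuli $|y| \sim 2^m R$ align exactly with the dyadic scale on which the kernel entries $K_l$ live, so each term of the H\"ormander sum can be read off directly from Proposition \ref{defK}. Concretely, I fix some $x > 0$ with $|x| < 2^i$ for some $i \in \Z$, choose $N \geq i$ large enough that $2^N > c|x|$, and set $R = 2^N$. For each $m \geq 1$, with $j := m + N > i$, the annulus $|y| \sim 2^m R$ coincides (up to measure zero) with $\{2^j < |y| \leq 2^{j+1}\}$, so Proposition \ref{defK} applied with $x_0 = 0$ gives $|K_l(y - x) - K_l(y)|$ explicitly.

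By that proposition, only the three indices $l \in \{j, j+1, j+2\}$ can make $K_l(\cdot - x) - K_l(\cdot)$ nonzero on this annulus. Focusing on the sub-strip $y \in (2^j, 2^j + x)$, which has Lebesgue measure $x > 0$, a direct inspection of the formulas yields $|K_j(y - x) - K_j(y)| = \tfrac{1}{2^{j+1}}$ and $|K_{j+1}(y - x) - K_{j+1}(y)| = \tfrac{1}{2^{j+1}}$, the latter coming only from the second piece in the $l = j+1$ formula (the first piece, like the whole $l = j+2$ term, is localized near $\pm 2^{j+1}$ and vanishes here). Consequently,
\begin{align*}
\|K(y - x) - K(y)\|_{l^2} = \left(\tfrac{1}{2^{2j+2}} + \tfrac{1}{2^{2j+2}}\right)^{1/2} = \frac{1}{\sqrt{2}\,2^{j}} = \frac{1}{\sqrt{2}\,2^{m+N}}
\end{align*}
on this sub-strip of positive measure.

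Taking the essential supremum over $|y| \sim 2^m R$ and multiplying by $(2^m R)^n = 2^{m+N}$, each term of the hypothetical H\"ormander sum is bounded below by $\tfrac{1}{\sqrt{2}}$. The sum over $m \geq 1$ therefore diverges, contradicting the inequality defining $H_{\infty, l^2}$ for any admissible constants, and forcing $K \notin H_{\infty, l^2}$. The only delicate point is the bookkeeping with Proposition \ref{defK}: one must check that on the chosen sub-strip the $l = j$ contribution and the second piece of the $l = j+1$ contribution both survive at the full size $2^{-j-1}$ and add (rather than cancel) in $l^2$, while the terms localized near $\pm 2^{j+1}$ vanish there. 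Once the intervals are drawn out, the rest is straightforward arithmetic.
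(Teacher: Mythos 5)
Your proof is correct and is essentially the direct argument one expects (the paper itself only cites Proposition \ref{prop1} to \cite{LRTo} without reproducing a proof). Choosing $R = 2^N$ so that $|y| \sim 2^m R$ lines up with dyadic shells, taking $x_0 = 0$ in Proposition \ref{defK}, and then isolating the sub-strip $(2^j, 2^j + x)$ (with $j = m+N$) correctly identifies the two surviving coordinates $l = j$ and $l = j+1$, each of magnitude $2^{-(j+1)}$, giving the lower bound $\|K(y-x)-K(y)\|_{l^2} = 2^{-j}/\sqrt{2}$ on a set of positive measure. With $n=1$ this makes every term of the series at least $1/\sqrt{2}$, so the series diverges for \emph{every} admissible pair $(x,R)$ of this form, which rules out $K \in H_{\infty, l^2}$ for any choice of constants $c_{\A}, C_{\A}$. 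The only point worth flagging is that you should say explicitly (as you implicitly do) that you are arguing by contradiction: given a putative $c>1$, pick $R=2^N>c|x|$; the rest is exactly as you wrote. This is the natural proof and surely the one in \cite{LRTo}.
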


\begin{obs}  As $K \not \in H_{\infty,\,l^2,k}$ we can not use  Theorem \ref{TeoP} to conclude
\begin{align*} \int_{\mathbb{R}} |S_b^kf(x)|^pw(x)dx = \int_{\mathbb{R}} \|\tilde{T}_b^kf(x)\|_{l^2}^pw(x)dx  \leq C \int_{\mathbb{R}} |M^{k+1}f(x)|^pw(x)dx. \end{align*}
This inequality is still an  open problem.
\end{obs}

\begin{propA}\label{prop2}\cite{LMRT} Let $\A_{\varepsilon}(t)=\exp(t^{\frac1{1+k+\varepsilon}})-1$, $\varepsilon \geq 0$ and $k\in \N \cup \{0\}$. Then, $K \in H_{\A_{\varepsilon},l^2,k}$   for all $\varepsilon > 0$, and $K \not \in H_{\A_0,l^2,k}$.
\end{propA}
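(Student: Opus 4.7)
The plan is to compute the left-hand side of the $L^{\A_\varepsilon,l^2,k}$-H\"ormander condition explicitly, using the pointwise description of $K_l(\cdot - x)-K_l(\cdot)$ given by Proposition \ref{defK}, and then reduce the question to the convergence of $\sum_m m^{-(1+\varepsilon)}$.

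\textbf{Step 1 (pointwise structure).} Fix $x\in\mathbb{R}$ and $R>c_{\A_\varepsilon}|x|$, and let $x_0=0$. Choose $i\in\Z$ with $|x|<2^i$. For $y$ in a dyadic annulus $|y|\sim 2^j$ with $j$ sufficiently large compared to $i$, Proposition \ref{defK} implies that the only non-zero components of the vector $\{K_l(y-x)-K_l(y)\}_{l\in\Z}$ are for $l\in\{j,j+1,j+2\}$, each of size comparable to $2^{-j}$, and each supported in a union of two intervals near $\pm 2^j$ of total length $\lesssim |x|$. Consequently
\[
\|K(y-x)-K(y)\|_{l^2}\;\approx\;\frac{1}{2^{j+1}}\,\chi_{E_j}(y),
\]
where $E_j$ is a set of measure $\lesssim |x|$ inside the annulus $|y|\sim 2^j$.

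\textbf{Step 2 (Luxemburg norm on the annulus).} Fix $m\geq 1$ and set $j$ so that $2^j\sim 2^m R$. The ball $B(0,2\cdot 2^m R)$ has measure comparable to $2^m R$. For $\A_\varepsilon(t)=\exp(t^{1/(1+k+\varepsilon)})-1$, the Luxemburg norm condition becomes
\[
\frac{|x|}{2^m R}\,\A_\varepsilon\!\left(\frac{c}{\lambda\, 2^m R}\right)\;\leq\;1,
\]
which, after taking logarithms, gives
\[
\bigl\|\,\|K(\cdot-x)-K(\cdot)\|_{l^2}\bigr\|_{\A_\varepsilon,|y|\sim 2^m R}\;\approx\;\frac{1}{2^m R\,[\log(2^m R/|x|)]^{1+k+\varepsilon}}\;\approx\;\frac{1}{2^m R\, m^{1+k+\varepsilon}},
\]
where the last equivalence uses $R>c_{\A_\varepsilon}|x|$, which makes $\log(2^m R/|x|)\approx m$ for large $m$. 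Both upper and lower bounds here are essentially forced by Step 1, since the function whose Luxemburg norm we compute is (up to constants) a multiple of a characteristic function, and for such functions the Luxemburg norm is computed exactly by inverting $\A_\varepsilon$.

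\textbf{Step 3 (summation).} Plugging into the definition of $H_{\A_\varepsilon,l^2,k}$,
\[
\sum_{m=1}^{\infty}(2^m R)\, m^k\,\bigl\|\|K(\cdot-x)-K(\cdot)\|_{l^2}\bigr\|_{\A_\varepsilon,|y|\sim 2^m R}\;\approx\;\sum_{m=1}^{\infty}\frac{m^k}{m^{1+k+\varepsilon}}\;=\;\sum_{m=1}^{\infty}\frac{1}{m^{1+\varepsilon}}.
\]
This series converges precisely when $\varepsilon>0$ and diverges when $\varepsilon=0$, which gives both $K\in H_{\A_\varepsilon,l^2,k}$ for $\varepsilon>0$ and $K\notin H_{\A_0,l^2,k}$.

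\textbf{Expected difficulty.} The routine parts are the pointwise computation of $\|K(y-x)-K(y)\|_{l^2}$ (which just reads off Proposition \ref{defK}) and the summation in Step 3. The main care is needed in Step 2: one must verify the Luxemburg norm \emph{both} as an upper bound (for the $\varepsilon>0$ case) \emph{and} as a matching lower bound (for the $\varepsilon=0$ non-membership). This requires checking that the level sets of $\|K(y-x)-K(y)\|_{l^2}$ are essentially a single characteristic function (so the infimum in the Luxemburg norm is attained at the value obtained by equating the integral to $1$), and that the constants hidden in $c_{\A_\varepsilon}$ and the passage from $\log(2^m R/|x|)$ to $m$ do not spoil the two-sided estimate.
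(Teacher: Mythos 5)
The paper does not prove Proposition~\ref{prop2}; it is cited from \cite{LMRT}. Your direct computation is nevertheless the natural approach, and it parallels the paper's own proof of the $\dagger$-version (Proposition~\ref{prop3}). The difference between the two is precisely the order in which the norms are taken: for the $H_{\A,l^2,k}$ condition you first take the $l^2$ norm of $\{K_l(y-x)-K_l(y)\}_l$ pointwise in $y$ (producing essentially $\tfrac{1}{2^{j+1}}\chi_{E_j}$), then the Luxemburg norm, then sum in $m$; for $H^\dagger_{\A,l^2,k}$ one computes the Luxemburg norm coordinate-by-coordinate first and the $l^2$ norm last. Carrying both out makes transparent why the conditions differ: your sum reduces to $\sum_m m^{-(1+\varepsilon)}$, while the $\dagger$-version reduces to $\bigl(\sum_m m^{-2(1+\varepsilon)}\bigr)^{1/2}$, which converges even at $\varepsilon=0$ -- this is exactly the content of Corollary~\ref{prop4} and the reason the $\dagger$-condition is strictly weaker.

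Your argument is correct. A few details worth being explicit about, in the spirit of your own "expected difficulty" remark. In Step~1, $\|K(y-x)-K(y)\|_{l^2}$ is not literally a multiple of $\chi_{E_j}$: the three coordinates $l=j,j+1,j+2$ have slightly different (overlapping) supports, so the pointwise value ranges over $[\tfrac{1}{2^{j+2}},\tfrac{\sqrt{3}}{2^{j+1}}]$ on $E_j$. This is harmless for the Luxemburg norm up to absolute constants, but one should note that the two-sided bound on $|E_j|\approx|x|$ comes from the $l=j$ coordinate alone (its support has measure exactly $2|x|$ and lies inside the correct annulus when $j>i$), while the upper bound comes from the union over $l\in\{j,j+1,j+2\}$. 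In Step~2, the two-sided estimate $\log(2^m R/|x|)\approx m$ is \emph{not} uniform in $R/|x|$: the upper bound on the Luxemburg norm (needed for membership when $\varepsilon>0$) holds for all $R>c_\A|x|$ since $\A^{-1}$ is increasing, but the lower bound (needed for non-membership at $\varepsilon=0$) requires fixing the ratio, e.g.\ taking $2^{i-2}<|x|<2^{i-1}$ and $R=2^i$ as the paper does in Proposition~\ref{prop3}, so that $\log(2^m R/|x|)\leq (m+3)\log 2\lesssim m$. Once this is made precise, the divergence of $\sum_m 1/m$ correctly rules out the existence of any pair $(c_\A,C_\A)$ for $\A_0$.
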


 In \cite{LRTo} and \cite{LMRT}, as an application of Theorems
\ref{hxa} and \ref{teo1} the authors  obtained the following result
%In \cite{LRTo}, as an application of Theorem \ref{hxa} the authors obtain,

%\begin{teoA}\cite{LRTo} \label{SM3} Let  $S$ be the square operator. Then
% the kernel  $K$ satisfies the  \\$L^{\A, \ell^{2}}$-H\"ormander condition with
%$\A(t)= \exp{({ t^\frac 1{1+\varepsilon}})}-1$ for all $\varepsilon >0$. Furthermore,
% for any
%$0<p<\infty$ and $w\in A_{\infty} $, there exists $C$ such that
%$$ \int_{\R}
%(||Tf(x)||_{\ell^2})^p w(x)\,dx=\int_{\R}
%(Sf(x))^p w(x)\,dx\le C\int_{\R} (M ^3 f(x))^pw(x)\, dx,$$
%whenever the left-hand side is finite.
%\end{teoA}

%In \cite{LMRT} as an application of Theorem \ref{teo1}, % for the general case,

\begin{teoA} {\rm \cite{LRTo,LMRT}}\label{corol1} Let $b \in BMO$ and $k \in \N \cup \{0\}$. Let $S_b^k$ be the $k$-th order commutator of the square operator. Then
%$K \in H_{\B,l^2} \cap H_{\A,l^2,k}$ with $\B(t)=\exp(t^{\frac1{1+\varepsilon}})-1$ and $\A(t)=\exp(t^{\frac1{1+k + \varepsilon}})-1$ for all $\varepsilon > 0$. Furthermore,
for any $0<p<\infty$ and $w \in A_{\infty}$, there exist $C$ such that
\begin{align*} \int_{\mathbb{R}} (S_b^kf(x))^pw(x)dx =  \int_{\mathbb{R}} (\|\tilde{T}_b^kf(x)\|_{l^2})^pw(x)dx \leq C \int_{\mathbb{R}} (M^{k+3}f(x))^pw(x)dx,\end{align*}
whenever the left-hand side is finite.
\end{teoA}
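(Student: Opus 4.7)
The plan is to apply Theorem \ref{teo1} (when $k\in\N$) and Theorem \ref{hxa} (when $k=0$) to the kernel $K$ of $\tilde T$, using the H\"ormander information supplied by Proposition \ref{prop2}, and then to identify the resulting Orlicz maximal operator with an iterate of $M$. Note that by definition of $\tilde T_b^k$ and $S_b^k$ we have $S_b^kf(x)=\|\tilde T_b^kf(x)\|_{l^2}$, so the target inequality is exactly the vector-valued Coifman-type estimate supplied by those theorems.

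First I would choose the Young functions. Take $\A(t)=\exp(t^{1/(k+2)})-1$; this is the function $\A_\varepsilon$ of Proposition \ref{prop2} in the case $\varepsilon=1$, so that proposition gives $K\in H_{\A,l^2,k}$ directly. When $k\in\N$ one must also exhibit a companion $\B$; I take $\B(t)=\exp(t^{1/2})-1$, which is $\A_\varepsilon$ with parameters $k=0$ and $\varepsilon=1$, so by the same proposition $K\in H_{\B,l^2}$.

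Next I would verify the generalized H\"older hypothesis $\overline{\A}^{-1}(t)\B^{-1}(t)\overline{\mathcal{C}}_k^{-1}(t)\leq t$ demanded by Theorem \ref{teo1}. Using the standard asymptotics for the inverses of complementary Young functions of exponential type,
$$\overline{\A}^{-1}(t)\sim \frac{t}{(\log t)^{k+2}},\qquad \B^{-1}(t)\sim (\log t)^2,\qquad \overline{\mathcal{C}}_k^{-1}(t)=(\log t)^k \quad (t\text{ large}),$$
the product telescopes to $\sim t$, and the inequality extends to all $t\geq 1$ after an adjustment on a bounded interval. Applying Theorem \ref{teo1} yields
$$\int_{\mathbb{R}}\|\tilde T_b^kf\|_{l^2}^p\, w\leq C\int_{\mathbb{R}}(M_{\overline{\A}}f)^p\, w.$$

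Finally, since $\overline{\A}(t)\sim t(1+\log t)^{k+2}$ for large $t$, the equivalence $M_{L(\log L)^{k+2}}\approx M^{k+3}$ recalled in Section 2 gives $M_{\overline{\A}}\approx M^{k+3}$, and the conclusion follows. The case $k=0$ is identical but invokes Theorem \ref{hxa} with $\A(t)=\exp(t^{1/2})-1$ and requires no auxiliary $\B$, producing the bound by $M^{3}=M^{k+3}$. I expect the only delicate step to be the explicit verification of the triple H\"older inequality on the three inverses; once the log-asymptotics are written out cleanly it is pure bookkeeping. The identification $M_{L(\log L)^{k+2}}\approx M^{k+3}$ is a standard consequence of John-Nirenberg and is already recorded in the preliminaries.
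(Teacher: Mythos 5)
Your proposal is correct and follows precisely the route the paper attributes to \cite{LRTo,LMRT}: apply Theorems \ref{hxa} and \ref{teo1} to the kernel of $\tilde T$, with the H\"ormander membership supplied by Proposition \ref{prop2} at $\varepsilon=1$, and then identify $M_{\overline{\A}}$ with $M^{k+3}$ via $\overline{\A}(t)\sim t(1+\log t)^{k+2}$ and the equivalence $M_{L(\log L)^{k+2}}\approx M^{k+3}$ recorded in Section 2. Your Young-function choices $\A(t)=\exp(t^{1/(k+2)})-1$ and $\B(t)=\exp(t^{1/2})-1$ are exactly the natural ones, and the triple inverse product is $\sim t$ (the standard bound $t\le\A^{-1}(t)\overline{\A}^{-1}(t)\le 2t$ gives $\le 2t$, absorbed by a harmless rescaling), so the verification is sound.
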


For the case of the kernel of the the square operator we obtain,

\begin{prop}\label{prop3}Let $k \in \N \cup \{0\}$ and $\A$ be a  Young function. Then,
$$K \in H_{\A,\,l^2,k}^{\dagger} \Leftrightarrow \left\| \left\{ \frac{m^k}{\A^{-1}(2^{m}8)}\right\}_{m\in\Z}\right\|_{l^2} < \infty. $$
\end{prop}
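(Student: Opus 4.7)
My plan is to evaluate the $H^{\dagger}_{\A,l^2,k}$ quantity explicitly for this kernel using the pointwise description in Proposition \ref{defK}, and identify it, up to bounded constants, with the $l^2$ series in the statement.

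Fix $x \in \mathbb{R}$, pick $i \in \Z$ with $|x| \leq 2^i$, and take $R := 4\cdot 2^i$, so that $R > c_\A |x|$ for any fixed $c_\A < 4$. For $m \geq 1$ the annular set $\{|y| \sim 2^m R\}$ coincides with $(-2^{j+1},-2^j) \cup (2^j,2^{j+1})$ for $j := m + i + 2$. Proposition \ref{defK} then shows that on this annulus $K_l(\cdot - x) - K_l(\cdot)$ vanishes for $l \notin \{j, j+1, j+2\}$, while for each of these three values it is a multiple of size $\lesssim 2^{-j}$ of the indicator of a set of measure $2|x|$.

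Using the identity $\|c \chi_E\|_{\A, B} = c/\A^{-1}(|B|/|E|)$ (valid for $E \subset B$) together with $|B| = 2^{m+i+4}$ and $|E| = 2|x|$, one gets $|B|/|E| = 2^{m+i+3}/|x|$, which for $|x| \sim 2^i$ is comparable to $2^m \cdot 8$. Hence
\begin{equation*}
\|K_l(\cdot - x) - K_l(\cdot)\|_{\A, |y| \sim 2^m R} \sim \frac{1}{2^j\, \A^{-1}(2^m \cdot 8)}, \qquad l \in \{j,j+1,j+2\},
\end{equation*}
and is zero otherwise. For each fixed $l$ only the three indices $m \in \{l-i-2,\, l-i-3,\, l-i-4\}$ contribute nontrivially; the factor $(2^m R)^n m^k = 2^{m+i+2} m^k$ then essentially cancels the $1/2^{j+1}$ inside the Luxemburg norm, so the $l$-th coordinate of the vector appearing inside the $l^2$-norm in the definition of $H^{\dagger}$ is comparable to $m^k/\A^{-1}(2^m \cdot 8)$ with $m = l - i - 2$.

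Taking the $l^2$-norm in $l$ and reindexing by $m = l - i - 2$ identifies the $H^{\dagger}$ quantity, up to absolute constants, with $\|\{m^k/\A^{-1}(2^m \cdot 8)\}_{m}\|_{l^2}$. The "$\Rightarrow$" implication follows by evaluating the $H^{\dagger}$ condition at this particular choice of $x$ and $R$. Conversely, the same computation carried out with arbitrary $R > c_\A|x|$ and using only $|E| \leq 2|x|$ together with monotonicity of $\A^{-1}$ yields a bound uniform in $x$ and $R$, giving the "$\Leftarrow$" implication (noting that replacing $8$ by any other fixed constant inside $\A^{-1}$ only shifts the index $m$ and does not affect finiteness of the series). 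The main technical subtlety will be in the forward direction: one needs the lower bound on $|B|/|E|$ to be comparable to $2^m\cdot 8$, not merely larger, and this is why one must use the \emph{exact} measure $|E|=2|x|$ provided by Proposition \ref{defK} and evaluate at an $|x|$ on the scale $2^i$.
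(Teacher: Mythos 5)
The proposal is correct and takes essentially the same approach as the paper's proof: fix a dyadic scale $R$ relative to $|x|$, use Proposition~\ref{defK} to see that only three consecutive values of $l$ meet each annulus, evaluate the Luxemburg norm via the explicit identity $\|c\chi_E\|_{\A,B}=c/\A^{-1}(|B|/|E|)$, and reindex $m\leftrightarrow l$ to identify the $H^{\dagger}$ quantity with the stated $l^2$ series. The only difference from the paper is cosmetic: you package both implications into a single two-sided comparison at $|x|\sim 2^i$ (and note that the ``$\Leftarrow$'' direction only needs the one-sided bound $|E|\leq 2|x|$), whereas the paper runs the two inequalities as separate upper and lower estimates with slightly different restrictions on $|x|$.
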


\begin{corol}\label{prop4}Let $\A(t)=\exp(t^{\frac1{1+k}})-1$. Then the kernel $K \in H_{\A,\,l^2,k}^{\dagger}$ for any $k \in \N \cup \{0\}$.
\end{corol}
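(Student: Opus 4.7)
The proof plan is to apply Proposition \ref{prop3} directly, reducing the problem to verifying convergence of a concrete series.

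First, I would compute the inverse of the Young function. For $\A(t)=\exp(t^{1/(1+k)})-1$, solving $\A(u)=s$ gives
\[
\A^{-1}(s) = \bigl(\log(1+s)\bigr)^{1+k}.
\]
In particular, for $m\geq 1$,
\[
\A^{-1}(2^m\cdot 8) = \bigl(\log(1+2^{m+3})\bigr)^{1+k} \asymp m^{1+k},
\]
where the comparability constants depend only on $k$ (for small $m$ both sides are bounded away from $0$, and for large $m$ the logarithm behaves like $m\log 2$).

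Next, by Proposition \ref{prop3}, the claim $K \in H^{\dagger}_{\A,l^2,k}$ reduces to checking
\[
\sum_{m\geq 1}\frac{m^{2k}}{\bigl(\A^{-1}(2^m\cdot 8)\bigr)^{2}}<\infty.
\]
(Any negative or zero index contributes finitely, since $m^k$ vanishes at $m=0$ when $k\geq 1$ and the finitely many remaining terms at small indices give a finite contribution; for $k=0$ only $m\geq 1$ is relevant in view of how the condition in Definition \ref{daga} is stated.) Substituting the estimate for $\A^{-1}(2^m\cdot 8)$,
\[
\sum_{m\geq 1}\frac{m^{2k}}{(\A^{-1}(2^m\cdot 8))^{2}}
\;\asymp\; \sum_{m\geq 1}\frac{m^{2k}}{m^{2(1+k)}}
\;=\;\sum_{m\geq 1}\frac{1}{m^{2}}<\infty.
\]

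This establishes the characterizing condition of Proposition \ref{prop3}, so $K\in H^{\dagger}_{\A,l^{2},k}$. There is no serious obstacle: the work is entirely in inverting $\A$ and recognizing that the two exponents in $\A^{-1}(s)\sim(\log s)^{1+k}$ were designed precisely so that the $\ell^2$ sum telescopes into a convergent $p$-series with exponent $2$. The only small technical point is to justify the asymptotic $\log(1+2^{m+3})\asymp m$ uniformly in $m\geq 1$, which is immediate.
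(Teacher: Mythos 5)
Your proof follows the same route as the paper: reduce to Proposition \ref{prop3}, invert $\A$ to get $\A^{-1}(s)=(\log(1+s))^{1+k}$, and observe that the resulting $\ell^2$ sum collapses to a convergent series $\sum m^{-2}$. The only stylistic difference is that you use the two-sided asymptotic $\A^{-1}(2^m 8)\asymp m^{1+k}$, while the paper writes the one-sided chain $\log(2^m 8+1)^{k+1}\geq \log(2^m)^{k+1}=(m\log 2)^{k+1}$ to keep an explicit constant; both give the same conclusion for $m\geq 1$.

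Your parenthetical dismissing the negative indices is wrong as stated, and it is worth flagging because it is exactly where the paper itself is imprecise. If the sequence in Proposition \ref{prop3} really ran over all $m\in\Z$, then for $m\to -\infty$ one has $2^m 8\to 0$, hence $\A^{-1}(2^m 8)=(\log(1+2^m 8))^{k+1}\sim (2^m 8)^{k+1}\to 0$ \emph{exponentially}, so the terms $m^k/\A^{-1}(2^m 8)$ blow up and the negative tail of the $\ell^2$ sum diverges --- these are infinitely many terms and they do not ``contribute finitely.'' What actually happens, as you half-notice in your remark about $k=0$, is that the inner sum in Definition \ref{daga} runs over $m\geq 1$, and by Proposition \ref{defK} each coordinate $l$ picks up at most three such annuli; consequently the sequence produced in the proof of Proposition \ref{prop3} is supported on $m\geq 1$ only, and the stated $\ell^2$ condition should be read over $m\in\N$ rather than $m\in\Z$. (The paper's own proof of Corollary \ref{prop4} tacitly makes this restriction: the bound $\log(2^m 8+1)^{k+1}\geq (m\log 2)^{k+1}$ is simply false for $m<0$.) Once you restrict to $m\geq 1$, your computation is exactly the paper's argument.
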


Corollary \ref{prop4} tell us that the kernel of the square operator satisfies the hypothesis of Theorems \ref{TeoImp} and \ref{TeoP}  (see Theorem \ref{Teo37} %Remark \ref{obsAB}
) and we obtain a new proof of the following result,

%Later in \cite{LRTo2}, the authors get a better result. In the case of this specific operator they obtained the following,

\begin{teoA} {\rm \cite{LRTo2}}\label{corol2} Let $b \in BMO$ and  $k \in \N \cup \{0\}$. Let $S_b^k$ be the $k$-th order commutator of the square operator. Then, for any $0<p<\infty$ and $w \in A_{\infty}$, there exists $C$ such that
\begin{align*} \int_{\mathbb{R}} (S_b^kf(x))^pw(x)dx  \leq C \int_{\mathbb{R}} (M^{k+2}f(x))^pw(x)dx,\end{align*}
 whenever the left-hand side is finite.
\end{teoA}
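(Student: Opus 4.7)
The proof reduces to a direct invocation of Theorem \ref{Teo37}: its hypothesis on the kernel is exactly Corollary \ref{prop4}, and its conclusion is exactly the desired weighted bound once one unpacks the definition of $S_b^k$.

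In detail, the plan is threefold. First, fix the Young function $\A(t) = \exp(t^{1/(1+k)})-1$. A routine computation gives $\overline{\A}(t) \sim t(1+\log t)^{k+1}$, and hence by the identity $M_{L(\log L)^{k+1}} \approx M^{k+2}$ recalled in Section 2 one has $M_{\overline{\A}} \approx M^{k+2}$, which is precisely the maximal operator appearing on the right-hand side of the target inequality. Second, check that $\tilde{T}$ qualifies as a vector-valued singular integral operator with $X = l^2(\Z)$: its strong $(p_0,p_0)$ boundedness for some $p_0>1$ is the classical $L^2$ theory of the square function, and the required $H_{1,l^2}$ estimate on the kernel $K$ is recorded in \cite{ToTo,LRTo}. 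Third, apply Theorem \ref{Teo37} to $\tilde{T}_b^k$ with this $\A$, using Corollary \ref{prop4} as the hypothesis $K \in H^{\dagger}_{\A,l^2,k}$; this yields
\begin{equation*}
\int_{\mathbb{R}} \|\tilde{T}_b^k f(x)\|_{l^2}^p\, w(x)\, dx \;\le\; C \int_{\mathbb{R}} (M^{k+2} f(x))^p\, w(x)\, dx
\end{equation*}
for every $0 < p < \infty$, $w \in A_\infty$, and $f \in L_c^\infty$ with finite left-hand side. Substituting $S_b^k f = \|\tilde{T}_b^k f\|_{l^2}$ from the definition of the $k$-th commutator of the square operator reproduces the statement verbatim.

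There is no genuine technical obstacle at this final stage; all the real work sits upstream in Proposition \ref{prop3} and Corollary \ref{prop4}. Conceptually, what enables the improvement from the $M^{k+3}$ of Theorem \ref{corol1} to the sharp $M^{k+2}$ is precisely the replacement of the classical $H_{\A,l^2,k}$ condition by the weaker $H^{\dagger}_{\A,l^2,k}$ condition: by Proposition \ref{prop2} the kernel $K$ fails to lie in $H_{\A_0,l^2,k}$ for $\A_0(t) = \exp(t^{1/(1+k)})-1$ and satisfies the classical Hörmander condition only with strictly larger $\A_\varepsilon$, whereas Corollary \ref{prop4} shows that the $\dagger$-version does hold at the endpoint $\A_0$, and $\A_0$ is exactly the Young function whose complementary produces $M^{k+2}$.
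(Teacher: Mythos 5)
Your proof is correct and follows essentially the same route as the paper: the paper's own ``new proof'' of this theorem is precisely the observation that Corollary~\ref{prop4} places the square operator's kernel in $H^{\dagger}_{\A,l^2,k}$ for $\A(t)=\exp(t^{1/(1+k)})-1$, after which Theorem~\ref{Teo37} (equivalently, Theorems~\ref{TeoImp} and~\ref{TeoP} specialized to this $\A$) delivers the bound by $M_{\overline{\A}}\approx M_{L(\log L)^{k+1}}\approx M^{k+2}$. Your additional remarks — verifying that $\tilde T$ is a vector-valued singular integral operator, and that $K\in H_{1,l^2}$ via Proposition~\ref{prop2} and the inclusion chain — are exactly the upstream facts the paper relies on implicitly, and your concluding comparison with the $H_{\A,l^2,k}$ story and Theorem~\ref{corol1} matches the paper's motivation.
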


%In this paper, we  obtained these results as a corollary of Theorems \ref{TeoImp} and \ref{TeoP}.
%we obtain a new proof of the last theorem using the Theorems \ref{TeoImp} and \ref{TeoP}

\subsection{Generalization of square operator}

In this subsection, we will build a family of operators and we will prove that they satisfy  Proposition \ref{prop3}. This operators are a ge\-ne\-ra\-li\-za\-tion of the square operator.

Let $X$ be a Banach space with a monotone norm, see (\ref{definorma}). We define  $S_{X}f(x):=||\tilde{T}f(x)||_{X}$, where $\tilde{T}$ was  define in  Definition \ref{defTtilde}. Observe that if $X=l^2$ then $S_{X}=S$, the square operator.

We can generalize Proposition \ref{prop3} and Corollary \ref{prop4}, replacing the $l^2$-norm by $X$-norm.
\\
In this context  Proposition \ref{prop3} affirm,  for all $k\geq 0$, and  $\A$ be a  Young function,
\begin{align}\label{condp3}
K \in H_{\A,X,k}^{\dagger} \Leftrightarrow \left\| \left\{ \frac{m^k}{\A^{-1}(2^{m}8)}\right\}_{m\in\Z}\right\|_{X} < \infty.
\end{align}
Also Corollary \ref{prop4}  can be rewritten in this way,  if $\A(t)=\exp(t^{\frac1{1+k}})-1$  and $k\in \N \, \cup \, \{0\}$, then  $ K \in H_{\A,X,k}^{\dagger}$.
\\

 Observe that if $k \in \N \cup \{0\}$ and $\A(t)=\exp(t^{\frac1{1+k}})-1$, by
%Let $\A(t)=\exp(t^{\frac1{1+k}})-1$ and let $\mathcal{E}$ be a Young function.
 Proposition  \ref{prop3}, we have

\begin{align}\label{condAE}
K \in H_{\A,X,k}^{\dagger} \Leftrightarrow  \left\Vert \left\lbrace  \frac1{m} \right\rbrace_{m \in (\Z-\{0\})} \right\Vert_{X}=C_{\A,X}<\infty.
\end{align}

 Applying Theorem \ref{Teo37}, we obtain

\begin{align*}
\int_{\R} \vert S_{X, b}^k f(x) \vert^{p} w(x) dx &= \int_{\R} \Vert \tilde T_b^kf(x) \Vert_{X}^{p} \, w(x) dx\\
 &\leqslant c \int_{\R} \left(M_{\overline{\A}}f (x) \right)^{p} w(x)dx \leqslant c \int_{\R} \left(M^{k+2}f (x) \right)^{p} w(x)dx,
\end{align*}

whenever the left-hand side is finite.

\begin{obs}
Examples of the Banach spaces $X$ are the $l^p$ spaces with $1\leq p
\leq \infty$. Observe that for $p=2$,  condition \ref{condAE} holds,
but for $p=1$ is easy to see that this condition does not hold. One open
question is: there exists a Young function $\A$ such that the
condition \ref{condp3} is finite for $X=l^1$? For example, there exists a
Young function such that the condition \ref{condAE} is replace by
$\left\| \left\{  \frac1{m^2} \right\}_{m \in (\Z-\{0\})}
\right\|_{l^1}$?
\end{obs}

A interesting example is, given a Young function $\mathcal{E}$,  we denote  $X_{\mathcal{E}}=({\mathbb{R}}^{\Z},\|\cdot\|_{\mathcal{E}})$, the Banach space with $$\displaystyle \|\{a_n\}\|_{\mathcal{E}}=\inf \left\{\lambda >0:\, \sum_{n \in \Z} \mathcal{E}\left(\frac{|a_n|}{\lambda}\right) \leq 1  \right\}.$$

Now we give an example of a family of Young functions, $\mathcal{E}$,
% such that the Banach spaces, $X_{\mathcal{E}}=(X,\|\cdot\|_{\mathcal{E}})$,
 for which condition (\ref{condAE}) holds. Let us consider,  for $t \geq 0$, the Young function ${\mathcal{E}}(t)=t^{r} \left(\log(t+1)\right)^{\beta}$, where $\beta \geq 0$ and $r \geq 1$.
%Moreover, let us  considerate the Luxemburg's norm associated to ${\B}$,  $||\cdot||_{{\B}}$,  where  the space $(\Z, \mu)$ and  $\mu$ in the counting measure.

Observe  that to  prove this assertion is equivalent to  prove that  there exist $0<\lambda < \infty$ such that  $\lambda \in G$, where $G$ is defined as
$$G:= \left\{ \lambda>0 \; : \; \sum_{m \in \Z-\{0\}} \mathcal{E}\left(\frac{1/m}{\lambda}\right) \leq 1 \right\}.$$

\vspace{0,5em}

Let $\lambda >1$,

$$
\begin{aligned}
\sum_{m \in \Z-\{0\}} \mathcal{E}\left(\frac{1/m}{\lambda}\right)  &= \sum_{m \in (\Z- \{0\})} \log\left(\frac{1}{|m| \lambda}+1\right)^{\beta} \frac{1}{(\lambda m)^{r}}\\
& \leq \log\left(\frac{1}{ \lambda}+1\right)^{\beta} \frac{1}{\lambda^{r}} \sum_{m  \in (\Z- \{0\})} \frac{1}{m^{r}}\\
& \leq \log(2)^{\beta} \frac{1}{\lambda^{r}} c.
\end{aligned}
$$

\vspace{1em}

Observe that $\log(2)^{\beta} \frac{1}{\lambda^{r}} c \leq 1$ if and only if $c \log(2)^{\beta} \leq \lambda^{r}$. In particular, $\lambda_0:=\left(\log(2)^{\beta} C +1\right)^{1/r}$ satisfies this inequality, i.e., $\lambda_0 \in G$. Thus, we have  (\ref{condAE}) is true.

\subsection{Proof of Proposition \ref{prop3} and Corollary \ref{prop4}}

In this subsection we proceed to study the applications. Let $K$ be the kernel of the square operator, defined above.

\begin{mydef*}
We define the sets,
\begin{align*} -F_m^{-} := (x-2^{m+i},-2^{m+i})  && F_m^{-} := (x+2^{m+i},2^{m+i}) \\
-F_m^{+}:=(-2^{m+i},x-2^{m+i}) && F_m^{+}:=(2^{m+i},x+2^{m+i})
 \end{align*}
\begin{align*} -F_m :=
\begin{cases}
-F_m^{-} & \text{si } x<0\\
-F_m^{+} & \text{si } x>0
\end{cases} && F_m :=
\begin{cases}
F_m^{-} & \text{si } x<0\\
F_m^{+} & \text{si } x>0
\end{cases} \end{align*}
\end{mydef*}
Observed that if $|x|<2^i$, $[-F_m \cup F_m]\cap[-F_{m-1} \cup F_{m-1}] = \emptyset$, for all $ m\in \Z$.

\begin{proof}[Proof of Proposition \ref{prop3}]
Recall $K \in H_{\A,\,l^2,k}^{\dagger}$ if  there exist $c_{\A}>1$ and $C_{\A}>0$ such that for each $x$ and $R > c_{\A} |x|$,\\
\begin{equation}\label{Hdag} \left\|\left\{\sum_{m=1}^{\infty} (2^mR)^n m^k \| (K_l(\cdot - x) - K_l( \cdot)) \chi_{|y|\sim2^mR}(\cdot)\|_{\A,B(0,2^{m+1}R)}\right\}_{l\in\Z}\right\|_X \leq C_{\A}.\\\end{equation}
\\

Let us prove, \begin{center}$\left\| \left\{ \frac{m^k}{\A^{-1}({2^{m}}8)}\right\}_{m\in\Z}\right\|_{l^2} < \infty $ $\Rightarrow$  $K \in H_{\A,l^2,k}^{\dagger}$. \end{center}

If $x=0$, $(K_l( \cdot - x) - K_l(\cdot))=0$ for all $\; l \in \Z$, then the condition (\ref{Hdag}) is trivial.
Let $x \neq 0$.
Let $R=2^i$, $i \in \Z$, $x$ such that $|x|<2^i$, $I_m:=(-2^{m+i},2^{m+i})$ and $-F_m$ y $F_m$ as above. For $l \in \Z$, using Proposition \ref{defK} we obtain
\begin{align*}
\sum_{m=1}^{\infty} 2^{m+i} m^k&\| (K_l(\cdot - x) - K_l(\cdot))\chi_{|y|\sim 2^{m+i}} \|_{\A,I_{m+1}}
= 2^{l+i} l^k \left\|\frac1{2^{l+i+1}}\chi_{-F_l\cup F_l}\right\|_{\A,I_{l+1}} \\&\quad+ 2^{l-1+i}(l-1)^k \left\|\frac1{2^{l+i+1}}\chi_{-F_l\cup F_l} + \frac1{2^{l+i}}\chi_{-F_{l-1}\cup F_{l-1}}\right\|_{\A,I_l}\\&\quad+ 2^{l-2+i} (l-2)^k \left\|\frac1{2^l+i}\chi_{-F_{l-1}\cup F_{l-1}}\right\|_{\A,I_{l-1}}
\\&\leq 2^{l+i} l^k \left\|\frac1{2^{l+i+1}}\chi_{-F_l\cup F_l}\right\|_{\A,I_{l+1}} + 2^{l-1+i}(l-1)^k \left\|\frac1{2^{l+i+1}}\chi_{-F_l\cup F_l}\right\|_{\A,I_l}
\\&\quad+ 2^{l-1+i}(l-1)^k\left\|\frac1{2^{l+i}}\chi_{-F_{l-1}\cup F_{l-1}}\right\|_{\A,I_l}
\\&\quad + 2^{l-2+i} (l-2)^k \left\|\frac1{2^{l+i}}\chi_{-F_{l-1}\cup F_{l-1}}\right\|_{\A,I_{l-1}}.
\end{align*}

Using that $\left\|\frac1{2^{l+i+1}}\chi_{-F_l\cup F_l}\right\|_{\A,I_{l}}\leq 2\left\|\frac1{2^{l+i+1}}\chi_{-F_l\cup F_l}\right\|_{\A,I_{l+1}}$ and \\$\left\|\frac1{2^{l+i}}\chi_{-F_{l-1}\cup F_{l-1}}\right\|_{\A,I_{l-1}} \leq 2 \left\|\frac1{2^{l+i}}\chi_{-F_{l-1}\cup F_{l-1}}\right\|_{\A,I_{l}}$ we get,

\begin{align*} \sum_{m=1}^{\infty} &2^{m+i} m^k\| (K_l( \cdot - x) - K_l(\cdot))\chi_{|y|\sim 2^{m+i}} \|_{\A,I_{m+1}}
\\&\leq 2.2^{l+i}l^k\left\|\frac1{2^{l+i+1}}\chi_{-F_l\cup F_l}\right\|_{\A,I_{l+1}} + 2.2^{l-1+i} (l-1)^k  \left\|\frac1{2^{l+i}}\chi_{-F_{l-1}\cup F_{l-1}}\right\|_{\A,I_{l}}
%\\&\quad = 2^{l+i+1}l^k\frac1{2^{l+i+1}\A^{-1}\left(\frac{2^{l+i+2}}{2|x|}\right)} + 2^{l+i} (l-1)^k  \frac1{2^{l+i}\A^{-1}\left(\frac{2^{l+i+1}}{2|x|}\right)}
\\&\quad =\frac{l^k}{\A^{-1}\left(\frac{2^{l+i+2}}{2|x|}\right)} + \frac{(l-1)^k}{\A^{-1}\left(\frac{2^{l+i+1}}{2|x|}\right)}
\\&\leq \frac{2l^k}{\A^{-1}\left(\frac{2^{l+i+1}}{|x|}\right)},
\end{align*}
where the last inequality holds due  $\A^{-1}$ is monotone.

Then, for all $ |x|< 2^i$, we obtain,
\begin{align*} \bigg\| \bigg\{ \sum_{m=1}^{\infty} 2^{m+i} m^k\| (K_l(\cdot - x) - K_l(\cdot))\chi_{|y|\sim 2^{m+i}} \|_{\A,t_{m+1}} \bigg\}_{l \in \Z} \bigg\|_{l^2} \leq \Bigg\| \Bigg\{ \frac{2l^k}{\A^{-1}\left(\frac{2^{l+i+1}}{|x|}\right)} \Bigg\}_{l\in\Z}\Bigg\|_{l^2}.
%\\&\leq {\underset{|x|<2^i}{sup}}\bigg\| \bigg\{ \frac{2l^k}{A^{-1}\left(\frac{2^{l+i+1}}{|x|}\right)} \bigg\}_{l\in\Z}\bigg\|_{l^2} = 2{\underset{|x|<2^i}{sup}}\bigg\| \bigg\{ \frac{l^k}{A^{-1}\left(\frac{2^{l+i+1}}{|x|}\right)} \bigg\}_{l\in\Z}\bigg\|_{l^2}
\end{align*}
In particular, the last inequality holds for all $|x|<\frac{2^i}{4}$. As
$ |x|<\frac{2^i}{4} \text{ then } \frac{2^{l+i+1}}{|x|} > 2^l8$,
\begin{align*} \bigg\| \bigg\{ \sum_{m=1}^{\infty} 2^{m+i} &m^k\| (K_l( \cdot - x) - K_l(\cdot))\chi_{|y|\sim 2^{m+i}} \|_{\A,B_{m+1}} \bigg\}_{l \in \Z} \bigg\|_{l^2} \leq \bigg\| \bigg\{ \frac{2l^k}{\A^{-1}\left(\frac{2^{l+i+1}}{|x|}\right)} \bigg\}_{l\in\Z}\bigg\|_{l^2}
\\& \leq \bigg\| \bigg\{ \frac{2l^k}{\A^{-1}\left(2^{l}8\right)} \bigg\}_{l\in\Z}\bigg\|_{l^2} = 2\bigg\| \bigg\{ \frac{l^k}{\A^{-1}\left(2^{l}8\right)} \bigg\}_{l\in\Z}\bigg\|_{l^2}.
\end{align*}
Then, by hypothesis, we obtain $K \in H_{\A,l^2,k}^{\dagger}$.

Now let us prove that  $K \in H_{\A,l^2,k}^{\dagger}$ $\Rightarrow$ $\left\| \left\{ \frac{m^k}{\A^{-1}(2^m8)}\right\}_{m\in\Z}\right\|_{l^2} < \infty.$
By hypothesis, there exist $c_{\A}>1$ and $C_{\A}>0$ such that for all $R \in \mathbb{R}$ and for all $x$, $ |x|c_\A <2^i$, then
\begin{align*} \bigg\| \bigg\{ \sum_{m=1}^{\infty} 2^{m}R \; m^k\| (K_l(\cdot - x) - K_l(\cdot))\chi_{|y|\sim 2^{m}R} \|_{\A,B_{m+1}} \bigg\}_{l \in \Z} \bigg\|_{l^2} \leq C_{\A}.
\end{align*}
Let $i \in \Z$. If $R=2^i$, then $|x|< 2^i$. Thus, using Proposition \ref{defK} we get
\begin{align*} \bigg\| \bigg\{ \sum_{m=1}^{\infty} 2^{m+i} &m^k\| (K_l( \cdot - x) - K_l(\cdot))\chi_{|y|\sim 2^{m+i}} \|_{\A,B_{m+1}} \bigg\}_{l \in \Z} \bigg\|_{l^2} \\& \geq  \biggl\| \biggl\{ 2^{l+i} l^k \left\|\frac1{2^{l+i+1}}\chi_{-F_l\cup F_l}\right\|_{\A,I_{l+1}}  \biggr\}_{l \in \Z} \biggr\|_{l^2}
=  \biggl\| \biggl\{ 2^{l+i} l^k \frac1{2^{l+1+i}\A^{-1}\left(\frac{2^{l+i+2}}{2|x|}\right)} \biggr\}_{l \in \Z} \biggr\|_{l^2}
\\& \quad= \bigg\| \bigg\{ \frac{l^k}{2\A^{-1}\left(\frac{2^{l+i+1}}{|x|}\right)} \bigg\}_{l \in \Z} \bigg\|_{l^2} = \frac1{2}\bigg\| \bigg\{ \frac{l^k}{\A^{-1}\left(\frac{2^{l+i+1}}{|x|}\right)} \bigg\}_{l \in \Z} \bigg\|_{l^2},
\end{align*} this holds for all $|x|<2^i$.
Then, taking supremum, we obtain,
\begin{align*}
C_{\A} &\geq {\underset{2^{i-2}<|x|<2^{i-1}}{\sup}}\bigg\| \bigg\{ \sum_{m=1}^{\infty} 2^{m}R \; m^k\| (K_l( \cdot - x) - K_l(\cdot))\chi_{|y|\sim 2^{m}R} \|_{\A,B_{m+1}} \bigg\}_{l \in \Z} \bigg\|_{l^2}
\\&\geq {\underset{2^{i-2}<|x|<2^{i-1}}{\sup}} \frac1{2}\bigg\| \bigg\{ \frac{l^k}{\A^{-1}\left(\frac{2^{l+i+1}}{|x|}\right)} \bigg\}_{l \in \Z} \bigg\|_{l^2} \geq \frac1{2}\bigg\| \bigg\{ \frac{l^k}{\A^{-1}\left(2^l8\right)} \bigg\}_{l \in \Z} \bigg\|_{l^2}.
\end{align*}
Hence,
\begin{align*} \bigg\| \bigg\{ \frac{l^k}{\A^{-1}\left({2^l8}\right)} \bigg\}_{l \in \Z} \bigg\|_{l^2} \leq 2C_{\A} < \infty.
\end{align*}
\end{proof}

\begin{proof}[Proof of Corollary \ref{prop4}] Let $\A(t)=\exp(t^{1+k})-1$. Using Proposition \ref{prop3}, is enough to prove that for any $k \in \N \cup \{0\}$,
 $$ \bigg\| \bigg\{ \frac{l^k}{\A^{-1}\left({2^l8}\right)} \bigg\}_{l \in \Z} \bigg\|_{l^2} < \infty.$$
As $\A(t)=\exp(t^{1+k})-1$, $\A^{-1}(t)=\log(t+1)^{k+1}$. If $m=0$,
\\ $\A^{-1}(2^m)=\A^{-1}(1)=\log(1+1)^{k+1}=\log(2)^{k+1}\neq 0$, then $\frac{m^k}{\A^{-1}\left({2^m8}\right)}=0$.
Also,
\\ $\A^{-1}(2^m8)=\log(2^m8+1)^{k+1} \geq \log(2^m8)^{k+1} \geq \log(2^m)^{k+1}$.
Then, we get,

\begin{align*} \bigg\| \bigg\{ \frac{m^k}{\A^{-1}\left({2^m8}\right)} \bigg\}_{m \in \Z} \bigg\|_{l^2}^2 &= \sum_{m \in \Z}\bigg(\frac{m^k}{\A^{-1}({2^m8})}\bigg)^2 = \sum_{m \in \Z\setminus \{0\}}\bigg(\frac{m^k}{\A^{-1}({2^m8})}\bigg)^2
\\&\leq \sum_{m \in \Z\setminus \{0\}}\bigg(\frac{m^k}{\log({2^m})^{k+1}}\bigg)^2 = \sum_{m \in \Z\setminus \{0\}}\bigg(\frac1{\log(2)^{k+1}}\frac{m^k}{m^{k+1}}\bigg)^2
\\&\quad =\frac1{\log(2)^{2(k+1)}} \sum_{m \in \Z\setminus \{0\}}\frac1{m^2} < \infty.
\end{align*}
%Thus, by  {\bf Proposition \ref{prop3}}, $K \in H_{\A,l^2,k}^{\dagger}$ where $\A(t)=\exp(t^{\frac1{1+k}})-1$.

\end{proof}

\section{Proofs of the main results}

 For the proof of the main results we need the following,

\begin{lema}\label{Sharp}
Let $k\in \N \, \cup \,\{0\}$. Let $\A$, $\B$ be Young functions such that $\overline{\A}^{-1}(t)\B^{-1}(t)\overline{\mathcal{C}}_k^{-1}(t) \leq t$, with $\overline{\mathcal{C}}_{k}(t)=\exp(t^{1/k})$ for $t \geq 1$. If $T$ is a  vector-valued singular integral operator with kernel $K$ such that $K \in H_{\B,X}^{\dagger} \cap H_{\A,X,k}^{\dagger}$, then for any $b \in BMO$, $ 0 < \delta < \varepsilon < 1$ we have\\

a) if $k=0$, $\B=\A$, then there exists $C >0$ such that,

$$M_{\delta}^{\sharp} \Vert Tf \Vert_{\mathbf{x}} (x):= \left( M^{\sharp} \Vert Tf \Vert_{\mathbf{x}}^{\delta}  \right)^{\frac{1}{\delta}}(x) \leq  C  \; M_{\overline{A}} f(x),$$
for all $x \in \R$.

b) If $k \in \N$, then there exists $C=C(\delta,\varepsilon) > 0$ such that,
\begin{align*} M_{\delta }^{\#} ( \| T_b^kf\|_X ) (x) \leq C \sum_{j=0}^{k-1} \| b\|_{BMO}^{k-j} M_{\varepsilon}(T_b^jf)(x) + C \| b\|_{BMO}^{k} M_{\overline{\A}}f(x),\end{align*} for all $x \in \R$.

\end{lema}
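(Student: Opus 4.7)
The plan is to adapt the P\'erez-style sharp-maximal argument to the vector-valued setting, with the twist that the new $H^{\dagger}$ condition places the annular sum \emph{inside} the $X$-norm. For part (a), I fix a ball $B = B(x_B, r_B) \ni x$, take $\tilde B = c_{\A} B$ with $c_{\A}$ from Definition \ref{daga}, split $f = f_1 + f_2$ with $f_1 = f\chi_{\tilde B}$, and use $c = \|Tf_2(x_B)\|_{\X}^{\delta}$ as the centering constant. The inequality $|a^{\delta}-b^{\delta}|\le|a-b|^{\delta}$ together with the triangle inequality in $X$ reduces matters to bounding $\frac{1}{|B|}\int_B\|Tf_1\|_{\X}^{\delta}$, which is controlled by Kolmogorov's inequality (since $T$ is weak-$(1,1)$ in the Bochner sense) by $C\bigl(\frac{1}{|\tilde B|}\int_{\tilde B}|f|\bigr)^{\delta}\le C\,M_{\overline{\A}}f(x)^{\delta}$, and $\frac{1}{|B|}\int_B\|Tf_2(y)-Tf_2(x_B)\|_{\X}^{\delta}dy$. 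For the latter I write the difference as $\{\int_{\R\setminus\tilde B}(K_l(y-u)-K_l(x_B-u))f(u)\,du\}_l$, decompose $\R\setminus\tilde B$ into dyadic annuli $|u-x_B|\sim 2^m r_B$, and apply the generalized H\"older inequality annulus by annulus. The crucial point is that the scalar $\|f\|_{\overline{\A},B(x_B,2^{m+1}r_B)}\le M_{\overline{\A}}f(x)$ does not depend on $l$, so it can be pulled outside the $X$-norm; what remains inside is exactly $\bigl\|\{\sum_m(2^m r_B)^n\|K_l(\cdot-z)-K_l(\cdot)\|_{\A,|v|\sim 2^m r_B}\}_l\bigr\|_{\X}$ with $z=x_B-y$, and this is $\le C_{\A}$ by $H_{\A,X}^{\dagger}$ after choosing $R\sim r_B$ so that $R>c_{\A}|z|$.

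For part (b), I use the P\'erez--Trujillo-Gonz\'alez expansion
\[
T_b^k f(y) \;=\; \sum_{j=0}^k \binom{k}{j}(-1)^{j}\,(b(y)-\lambda)^{k-j}\,T\bigl((b-\lambda)^{j} f\bigr)(y),
\]
with $\lambda = b_{\tilde B}$, isolating the $j=k$ term $(-1)^{k}T((b-\lambda)^{k}f_2)(y)$ whose value at $x_B$ plays the role of centering constant. Splitting $f=f_1+f_2$ and inverting the same identity inductively to re-express $T((b-\lambda)^{j}f)$ in terms of $T_b^{i}f$ for $i\le j<k$, the remaining pieces fall into three types: (i) a smoothing piece $T((b-\lambda)^{k}f_2)(y)-T((b-\lambda)^{k}f_2)(x_B)$; (ii) a local piece $T((b-\lambda)^{k}f_1)(y)$; and (iii) cross pieces $(b(y)-\lambda)^{k-i}T_b^{i}f(y)$, $0\le i\le k-1$. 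Piece (i) is treated by annular decomposition as in (a), but now with the three-function H\"older for $\overline{\A}$, $\B$ and $\overline{\mathcal{C}}_k$ (using that $\|1\|_{\B,B'}$ is a constant); the John--Nirenberg estimates \eqref{JN1}--\eqref{JN2} yield $\|(b-\lambda)^{k}\|_{\overline{\mathcal{C}}_k,B(x_B,2^{m+1}r_B)}\le Cm^{k}\|b\|_{BMO}^{k}$, and the $m^{k}$ factor is absorbed by the $m^{k}$ built into $H_{\A,X,k}^{\dagger}$, giving $\le C\|b\|_{BMO}^{k}M_{\overline{\A}}f(x)$. Piece (ii) is handled by Kolmogorov plus the same three-function H\"older. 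For each cross piece (iii) I apply H\"older in $y$ with exponents $r=\varepsilon/(\varepsilon-\delta)$ and $r'=\varepsilon/\delta$: the $b$-factor is bounded via \eqref{JN1} by $C\|b\|_{BMO}^{(k-i)\delta}$, and the remaining $\bigl(\frac{1}{|B|}\int_B\|T_b^{i}f\|_{\X}^{\varepsilon}\bigr)^{\delta/\varepsilon}$ is $\le M_{\varepsilon}(T_b^{i}f)(x)^{\delta}$. Raising to the $1/\delta$-power and summing yields the stated inequality.

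The main obstacle will be the book-keeping between the vector-valued structure and the new $H^{\dagger}$ condition: unlike the classical $H_{\A,X,k}$, where the $X$-norm is taken annulus by annulus \emph{before} summing, $H_{\A,X,k}^{\dagger}$ puts the annular sum inside a single $X$-norm. Consequently, each H\"older estimate performed on an individual annulus must produce a scalar (i.e.\ $l$- and $m$-independent) factor -- here $M_{\overline{\A}}f(x)$, and, when $k\ge 1$, $\|b\|_{BMO}^{k}$ together with a \emph{pure} $m^{k}$ growth matching the weight inside $H_{\A,X,k}^{\dagger}$ -- so that only the $l$-dependent kernel differences remain inside the $X$-norm, where they are absorbed by the hypothesis. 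A secondary delicate point is choosing the centering constant for the sharp maximal so that exactly the non-integrable tail coming from $f_2$ is cancelled; the P\'erez--Trujillo-Gonz\'alez expansion above identifies this choice as $(-1)^{k}T((b-\lambda)^{k}f_2)(x_B)$.
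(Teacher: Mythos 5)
Your overall strategy coincides with the paper's: same choice of centering constant $T((b_{\tilde B}-b)^k f_2)(c_B)$ (equivalently your $(-1)^kT((b-\lambda)^kf_2)(x_B)$), same three-way split into cross, local, and smoothing pieces via the commutator expansion, same use of Jensen's inequality plus the monotonicity of the $X$-norm to push $\|\cdot\|_X$ inside the $y$-integral, and the same mechanism of pulling the $l$-independent scalar $M_{\overline{\A}}f(x)$ (and the $\|b\|_{BMO}$ powers) outside the $X$-norm so that what remains is exactly what the $H^{\dagger}$ hypotheses control. Part (a) and the estimates of the cross piece (iii) and local piece (ii) of part (b) match the paper's treatment.

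There is, however, a genuine gap in your estimate of the smoothing piece (i), that is $III$, for $k\ge 1$. On each annulus $B_j = 2^{j+1}B$ you apply a single three-function H\"older, pairing the kernel difference with $\B$, $(b-b_{\tilde B})^k$ with $\overline{\mathcal{C}}_k$, and $f$ with $\overline{\A}$, and you bound $\|(b-b_{\tilde B})^k\|_{\overline{\mathcal{C}}_k,B_j}\le Cj^k\|b\|_{BMO}^k$. Summing in $j$ and taking $\|\cdot\|_X$ over $l$, this produces
\[
\Bigl\|\bigl\{\textstyle\sum_{j}(2^jR)^n\, j^k\,\|K_l(\cdot-z)-K_l(\cdot)\|_{\B,|y|\sim 2^jR}\bigr\}_l\Bigr\|_X,
\]
which is precisely the $H^{\dagger}_{\B,X,k}$ quantity. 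But the lemma only assumes $K\in H^{\dagger}_{\B,X}\cap H^{\dagger}_{\A,X,k}$: the $j^k$ weight may accompany the $\A$-norm of the kernel, not the $\B$-norm, so your claim that it is \emph{absorbed by} $H^{\dagger}_{\A,X,k}$ does not follow. The fix, which the paper implements (following [LMRT]), is to split on each annulus $b-b_{\tilde B}=(b-b_{B_j})+(b_{B_j}-b_{\tilde B})$ and use $|b-b_{\tilde B}|^k\le 2^{k-1}\bigl(|b-b_{B_j}|^k+|b_{B_j}-b_{\tilde B}|^k\bigr)$. The oscillatory piece satisfies $\|(b-b_{B_j})^k\|_{\overline{\mathcal{C}}_k,B_j}\le C\|b\|_{BMO}^k$ with \emph{no} $j^k$, so the three-function H\"older pairs the kernel with $\B$ and the resulting sum is handled by $H^{\dagger}_{\B,X}$; the constant piece satisfies $|b_{B_j}-b_{\tilde B}|^k\le (Cj\|b\|_{BMO})^k$, producing the $j^k$, and after pulling it out a two-function H\"older pairs the kernel with $\A$ and $f$ with $\overline{\A}$, so that sum is handled by $H^{\dagger}_{\A,X,k}$. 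Without this split your argument requires the stronger, unassumed hypothesis $K\in H^{\dagger}_{\B,X,k}$.
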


\begin{proof}

The argument  is similar to the proof of Lemma 5.1 in \cite{LMRT}, we only give the main changes. Let consider the part (b), the part (a) is analog with $k=0$.\\

 Let  $K \in H_{\B,X}^{\dagger} \cap H_{A,X,k}^{\dagger}$ and $k \in \N$.
 Then for any $\lambda \in \mathbb{R}$, we can write
\begin{equation}\label{defT} T_b^kf(x)= T((\lambda - b)^kf)(x) + \sum_{m=0}^{k-1} C_{k,m} (b(x)-\lambda)^{k-m} T_b^mf(x). \end{equation}
Let us fix $x \in \R$ and  $B$ a ball such that $x \in B$, $\tilde{B}:=2B$ and $c_B := \text{center of the ball } B$. Let  $f=f_1 + f_2$, where $f_1:=f \chi_{\tilde{B}}$ and let $a:= \| T(b_{\tilde{B}}- b)^kf_2(c_B) \|_X$.
Using (\ref{defT}) and taking $\lambda = b_{\tilde{B}} = \frac1{|\tilde{B}|} \int_{\tilde{B}} b$,

\begin{align*} \biggl( \frac1{|B|}&\int_B |\|T_b^kf(y)\|_X^{\delta} -|a|^{\delta}| dy \biggr)^{1/\delta} \leq
%\\& {\red \leq \left( \frac1{|B|}\int_B |\|T_b^kf(y) - T(b_{\tilde{B}}- b)^kf_2(c_B) \|_X|^{\delta} dy \right)^{1/\delta}}
%\\& = \left( \frac1{|B|}\int_B \|T_b^k(y) - T(b_{\tilde{B}}- b)^kf_2(c_B) \|_X^{\delta} dy \right)^{1/\delta}
   \left( \frac1{|B|}\int_B \|T_b^kf(y) - T(b_{\tilde{B}}- b)^kf_2(c_B) \|_X^{\delta} dy \right)^{1/\delta}
\\ & = \Biggl( \frac1{|B|}\int_B \| \sum_{m=0}^{k-1} C_{k,m} (b(y)-b_{\tilde{B}})^{k-m} T_b^mf(y) + T((b_{\tilde{B}} - b)^kf)(y) \\&\qquad\qquad\quad- T((b_{\tilde{B}}- b)^kf_2)(c_B) \|_X^{\delta} dy \Biggr)^{1/\delta}
\\& \leq C \Biggl[\sum_{m=0}^{k-1} C_{k,m}\biggl( \frac1{|B|}\int_B \|(b(y)-b_{\tilde{B}})^{k-m} T_b^mf(y)\|_X^{\delta} dy \biggr)^{1/\delta} \\
& \quad\qquad\qquad+\biggl( \frac1{|B|} \int_{B}\|T((b_{\tilde{B}} - b)^kf_1)(y)\|_X^{\delta} dy \biggr)^{1/\delta} \\
& \quad \qquad\qquad+ \biggl(\frac1{|B|}\int_{B} \| T((b_{\tilde{B}}- b)^kf_2)(y)- T((b_{\tilde{B}}- b)^kf_2)(c_B) \|_X^{\delta} dy \biggl)^{1/\delta} \Biggr]
\\& = C [ I + II + III].
\end{align*}
\\
 The estimates of $I$ and $II$ are analogous to corresponding
in the Lemma 5.1 in \cite{LMRT}. Then
\begin{align*} I  &\leq c \sum_{m=0}^{k-1} C_{k,m} \|b\|_{BMO}^{k-m} M_{\varepsilon}(\|T_b^kf\|_X)(x),\\
 II & \leq C \|b\|_{BMO}^k M_{\overline{\A}}f(x).
\end{align*}

Now $III$. By Jensen's inequality and the property of the norm (\ref{definorma}), we get
\begin{align*} III &\leq \frac1{|B|}\int_{B} \| T((b_{\tilde{B}}- b)^kf_2)(y)- T((b_{\tilde{B}}- b)^kf_2)(c_B) \|_X dy \\&= \frac1{|B|}\int_{B} \biggl\| \biggl\{ \int_{\tilde{B}^c}(K_l(y-z) - K_l(c_B - z))(b_{\tilde{B}}- b(z))^k f(z)dz \biggr\} \biggr\|_X dy
\\& \leq \frac1{|B|}\int_{B} \biggl\| \biggl\{ \biggl|\int_{\tilde{B}^c}(K_l(y-z) - K_l(c_B - z))(b_{\tilde{B}}- b(z))^k f(z)dz \biggr|\biggr\} \biggr\|_X dy
\\& \leq \frac1{|B|}\int_{B} \biggl\| \biggl\{ \int_{\tilde{B}^c}|K_l(y-z) - K_l(c_B - z)||b_{\tilde{B}}- b(z)|^k |f(z)|dz \biggr\} \biggr\|_X dy.
\end{align*}

For each coordinate $l \in \Z$, we proceed as in the proof of Lemma 5.1 in \cite{LMRT}. Let $B_j:=2^{j+1}B$, for $j\geq1$ and we obtain
\begin{align*}\int_{\tilde{B}^c}|K_l(y-z) &- K_l(c_B - z)||b_{\tilde{B}}- b(z)|^k |f(z)|dz\\
&\leq C \|b\|_{BMO}^k M_{\overline{\A}}f(x) \Biggl(\sum_{j=1}^{\infty} (2^jR)^n\|(K_l(y-\cdot) - K_l(c_B - \cdot))\chi_{S_j}\|_{\B,B_j} \\&\qquad \quad + \sum_{j=1}^{\infty} (2^jR)^n j^k \|(K_l(y-\cdot) - K_l(c_B - \cdot))\chi_{S_j}\|_{\A,B_j}\Biggr)
\end{align*}

Hence,
\begin{align*} III &\leq  \frac1{|B|}\int_{B} \biggl\|   \biggl\{C \|b\|_{BMO}^k M_{\overline{\A}}f(x) \Biggl(\sum_{j=1}^{\infty} (2^jR)^n\|(K_l(y-\cdot) - K_l(c_B - \cdot))\chi_{S_j}\|_{\B,B_j} \\&\qquad \quad + \sum_{j=1}^{\infty} (2^jR)^n j^k \|(K_l(y-\cdot) - K_l(c_B - \cdot))\chi_{S_j}\|_{\A,B_j}\Biggr) \biggr\} \biggr\|_X dy
\\ &\leq C \|b\|_{BMO}^k M_{\overline{\A}}f(x) \frac1{|B|}\int_{B} \biggl[ \biggl\| \sum_{j=1}^{\infty} (2^jR)^n\|(K_l(y-\cdot) - K_l(c_B - \cdot))\chi_{S_j}\|_{\B,B_j} \biggr\|_{X} \\&\quad+\biggl\|\sum_{j=1}^{\infty} (2^jR)^n j^k \|(K_l(y-\cdot) - K_l(c_B - \cdot))\chi_{S_j}\|_{\A,B_j}\biggl\|_{X} \biggl] dy
\\ & \leq C \|b\|_{BMO}^k M_{\overline{\A}}f(x) \frac1{|B|}\int_{B} dy =  C \|b\|_{BMO}^k M_{\overline{\A}}f(x),
\end{align*}
where the last inequality holds since $K \in H_{\B,X}^{\dagger} \cap H_{\A,X,k}^{\dagger}$ and we have used that $x \in B \subset B_j$ and that $|x_B- y|< R$ since $y \in B$.
\\

Thus,
\begin{align*} \biggl( \frac1{|B|}\int_B |\|T_b^k(y)\|_X^{\delta} -|a|^{\delta}| dy \biggr)^{1/\delta} \leq C \sum_{m=0}^{k-1} &C_{k,m} \|b\|_{BMO}^{k-m} M_{\varepsilon}(\|T_b^kf\|_X)(x) \\&\quad \qquad + C \|b\|_{BMO}^k M_{\overline{\A}}f(x).
\end{align*}
\end{proof}

Now we proceed to prove the main  theorems.
\begin{proof}[Proof of Theorem \ref{TeoImp}]
Let $w \in A_{\infty},$ and suppose the kernel $\ K \in H_{\A,X}^{\dag}$, where $\A$ is a Young function and $f \in C_c^{\infty}(\R)$.
Let $p>0$, we take $\varepsilon$ \ such that \ $0 < \delta=p \varepsilon < 1.$ Then using the part (a) of Lemma \ref{Sharp} we obtain
$$\begin{aligned}
  \int_{\R} \Vert Tf \Vert_{\mathbf{x}}^{p} w %&\leq &\int_{\R} M\left( \Vert Tf \Vert_{\mathbf{x}}^{p} \right) w\nonumber\\
&\leq \int_{\R} M_{\varepsilon} \left( \Vert Tf \Vert_{\mathbf{x}}^{p} \right) w
=\int_{\R} \left( M ( \Vert Tf \Vert_{\mathbf{x}}^{p \, \varepsilon})\right)^{\frac{1}{\varepsilon}} w\nonumber\\
&\leq  c\int_{\R} \left( M^{\sharp} (\Vert \Vert Tf \Vert_{\mathbf{x}}^{\delta}) \right)^{\frac{p}{\delta}} \, w
=c\int_{\R} \left( M^{\sharp}_{\delta} (\Vert Tf \Vert_{\mathbf{x}} \right)^{p}w\\
&\leq  c \int_{\R} \left( M_{\overline{A}} f \right)^{p} w,\nonumber
\end{aligned}
$$
 for the second inequality we need the left hand is finite, to prove this we use the fact that $f \in C_c^{\infty}$ imply $\int_{\R} M_{\varepsilon} \left( \Vert Tf \Vert_{\mathbf{x}}^{p} \right) w < \infty.$ (see \cite{D} and \cite{LMRT}).
\\
%where the last inequality is deduced for the \textbf{Lemma} \ref{L3}.
Thus,
$$\int_{\R} \Vert Tf \Vert_{\mathbf{x}}^{p} w \leq c \int_{\R}\left( M_{\overline{A}} f \right)^{p} w.$$
Since the space $C_c^{\infty}(\R)$ is dense in $L^p(\R)$ for all $p$, we prove the result.

\end{proof}

\begin{proof}[Proof of Theorem \ref{TeoP}]
 The proof is analogous to the proof of Theorem 3.3, part (a), in
\cite{LMRT}, using in this case Lemma \ref{Sharp}.
\end{proof}

\section{Fractional integrals}

For fractional integral operator there exist
$L^{\A,\alpha}$-H\"ormander conditions defined in \cite{BLR}.
The authors obtain the inequality (\ref{Coifineq}) with
$M_{\overline{\A},\alpha}$, the
fractional maximal operator associated to $\overline{\A}$. In this section, we  present
a  weaker condition for fractional vector-valued kernels
% than$L^{\A,\alpha}$-H\"ormandercondition
 and obtain similar results and applications.\\

%We can define the new condition and prove the same kind of results for the case of fractional integral operators.

Recall the notation: $|x|\sim s$ means $s < |x|\leq 2s$ and given a Young function $\A$ we write $\|f\|_{ \A,|x|\sim s}=\|f \chi_{|x|\sim s}\|_{\A,B(0,2s)}$. \\

The new condition is the following,

\begin{mydef}\label{dagaalpha} Let $K_{\alpha}=\{ K_{\alpha,l} \}_{l \in \Z}$ be a vector-valued function, $\A$ be a Young function, $0<\alpha < n$ and $k \in \N\cup \{0\}$. The function
 $K_{\alpha}$ satisfies the $L^{\alpha,\A,X,k}_{\dagger}$-H\"ormander condition ($K \in H^{\dagger}_{\alpha,\A,X,k}$), if there exist $c_{\A}>1$ and $C_{\A}>0$ such that for all $x$ and $R>c_{\A}|x|$,
 \begin{align*}\Biggl\| \biggl\{ \sum_{m=1}^{\infty} (2^mR)^{n- \alpha} m^k  \| K_{\alpha,l}(\cdot - x) - K_{\alpha,l}(\cdot)\|_{\A,|y|\sim2^mR}\biggr\}_{l\in \Z} \Biggr\|_{X} \leq C_{\A}.\end{align*}
We say that $K_{\alpha} \in H^{\dagger}_{\alpha,\infty,k}$ if $K_{\alpha}$ satisfies the previous condition with $\|\cdot\|_{L^{\infty},|x|\sim 2^mR}$ in place of $\|\cdot\|_{\A,|x|\sim 2^mR}$.

If $k=0$, we denote $H^{\dagger}_{\alpha,\A,X}=H^{\dagger}_{\alpha,\A,X,0}$ and $H^{\dagger}_{\alpha,\infty,X}=H^{\dagger}_{\alpha,\infty,X,0}$.\\
\end{mydef}

Also we need an extra condition that ensure certain control of the size, in this case is,
\begin{mydef} Let $\A$ be a Young function and let $0<\alpha < n$. The function $K_{\alpha} = \{ K_{\alpha,l} \}_{l \in \Z}$ is said to satisfy the ${\mathscr{S}}_{\alpha,\A, X}^{\dagger}$ condition, denote it by $K_{\alpha} \in {\mathscr{S}}_{\alpha,\A, X}^{\dagger}$, if there exists a constant $C>0$ such that
$$\Biggr\|\left\{\|K_{\alpha,l}\|_{\A,|x| \sim s}\right\}_{l \in \Z}\Biggl\|_{X} \leq C s^{\alpha - n}.$$
\end{mydef}

\begin{obs}
If $\A(t)\leq c \B(t)$ for $t>t_0$, some $t_0>0$, then
$$H^{\dagger}_{\alpha,\B,X,k} \subset H^{\dagger}_{\alpha,\A,X,k} \qquad \text{and } \qquad
{\mathscr{S}}_{\alpha,\B, X}^{\dagger}\subset {\mathscr{S}}_{\alpha,\A, X}^{\dagger}. $$

\end{obs}

\begin{obs}
Observe that the $M_{\alpha,\overline{\A}}$ is the fractional maximal operator associated to the Young function $\overline{\A}$, that is
$$ M_{\alpha,\overline{\A}}f(x):= {\underset{B \ni x}{\sup}} |B|^{\alpha/n}\|f\|_{\overline{\A},B}.$$
\end{obs}

 The results in this case are

\begin{teo}\label{Int}Let $\A$ be a Young function and $0< \alpha < n$. Let $T_{\alpha}f=\{ K_{\alpha,l} \ast f\}_{l \in \Z}$ with kernel $K_{\alpha} = \{ K_{\alpha,l} \}_{l \in \Z}  \in {\mathscr{S}}_{\alpha,\A, X}^{\dagger} \cap \, H_{\alpha,\A,X}^{\dagger}$. Let $0 < p < \infty$ and $w \in A_{\infty}$, then there exist $c>0$ such that
\begin{align*} \int_{\R} \| T_{\alpha}f\|^p_X w \leq C \int_{\R} (M_{\alpha,\overline{\A}}f)^p w, \quad f \in L_c^{\infty}(\R),\end{align*}
whenever the left-hand side is finite.
\end{teo}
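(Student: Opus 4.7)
The plan is to parallel the proof of Theorem \ref{TeoImp}, with the Hardy--Littlewood maximal function replaced throughout by the fractional maximal $M_{\alpha,\overline{\A}}$. The core step I will establish is a fractional analogue of Lemma \ref{Sharp}(a): for every $0<\delta<1$ and every $x\in\R$,
\begin{equation*}
M_\delta^{\#}\bigl(\|T_\alpha f\|_X\bigr)(x)\leq C\,M_{\alpha,\overline{\A}}f(x).
\end{equation*}
Granted this, the argument at the end of the proof of Theorem \ref{TeoImp} transfers verbatim: for $w\in A_\infty$ pick $\varepsilon>0$ with $\delta=p\varepsilon\in(0,1)$ and chain
\begin{equation*}
\int_{\R}\|T_\alpha f\|_X^{p}\,w\leq\int_{\R}M_\varepsilon\bigl(\|T_\alpha f\|_X^{p}\bigr)\,w\leq c\int_{\R}\bigl(M_\delta^{\#}\|T_\alpha f\|_X\bigr)^{p}w\leq c\int_{\R}\bigl(M_{\alpha,\overline{\A}}f\bigr)^{p}w,
\end{equation*}
the middle inequality being Fefferman--Stein for $A_\infty$ weights, applicable because the left-hand side is finite by hypothesis and, as in \cite{D,LMRT}, $\int_{\R}M_\varepsilon(\|T_\alpha f\|_X^{p})w<\infty$ for $f\in L_c^\infty(\R)$.

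To set up the sharp maximal estimate I would fix $x\in\R$, a ball $B\ni x$ of radius $r$ and center $c_B$, set $\tilde B=2B$, and split $f=f_1+f_2$ with $f_1=f\chi_{\tilde B}$. Taking the constant vector $a=T_\alpha f_2(c_B)\in X$ and using $|\|u\|_X^{\delta}-\|v\|_X^{\delta}|\leq\|u-v\|_X^{\delta}$ for $0<\delta<1$,
\begin{equation*}
\Bigl(\tfrac{1}{|B|}\int_B\bigl|\|T_\alpha f(y)\|_X^{\delta}-\|a\|_X^{\delta}\bigr|\,dy\Bigr)^{1/\delta}\leq I+II,
\end{equation*}
where $I=\bigl(\tfrac{1}{|B|}\int_B\|T_\alpha f_1(y)\|_X^{\delta}\,dy\bigr)^{1/\delta}$ and $II$ is the analogous quantity for $T_\alpha f_2(y)-T_\alpha f_2(c_B)$. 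Taking the supremum over such $B$ and over constant vectors yields the required sharp bound once $I$ and $II$ are controlled separately by $M_{\alpha,\overline{\A}}f(x)$.

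The nonlocal term $II$ I would handle in direct analogy with term $III$ in the proof of Lemma \ref{Sharp}. Writing $\tilde B^{c}=\bigsqcup_{j\geq 1}S_j$ with $S_j=2^{j+1}B\setminus 2^{j}B$ and applying the generalized H\"older inequality coordinatewise, each integral $\int_{S_j}|K_{\alpha,l}(y-z)-K_{\alpha,l}(c_B-z)||f(z)|\,dz$ is bounded by
\begin{equation*}
C\,(2^{j+1}r)^{n}\,\|(K_{\alpha,l}(y-\cdot)-K_{\alpha,l}(c_B-\cdot))\chi_{S_j}\|_{\A,2^{j+1}B}\,\|f\|_{\overline{\A},2^{j+1}B}.
\end{equation*}
Splitting $(2^{j+1}r)^{n}=(2^{j+1}r)^{n-\alpha}(2^{j+1}r)^{\alpha}$ and absorbing $(2^{j+1}r)^{\alpha}\|f\|_{\overline{\A},2^{j+1}B}\leq M_{\alpha,\overline{\A}}f(x)$ (legal since $x\in 2^{j+1}B$) outside the sum, the remaining sum is exactly what is controlled by $K_\alpha\in H_{\alpha,\A,X}^{\dagger}$ after taking the $X$-norm, so $II\leq C\,M_{\alpha,\overline{\A}}f(x)$.

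The local term $I$ is the main technical obstacle. The cleanest route is to first establish---by a Calder\'on--Zygmund decomposition exploiting the size condition $\mathscr{S}_{\alpha,\A,X}^{\dagger}$ to handle the good part and $H_{\alpha,\A,X}^{\dagger}$ to handle the bad part, all while keeping the $X$-valued structure intact---that $T_\alpha$ maps $L^{1}(\R)$ into the $X$-valued weak-type space $L^{n/(n-\alpha),\infty}$. Kolmogorov's inequality applied to $T_\alpha f_1$ on $B$ then gives
\begin{equation*}
I\leq C\,|\tilde B|^{\alpha/n}\,\tfrac{1}{|\tilde B|}\int_{\tilde B}|f|\,dz\leq C\,M_\alpha f(x)\leq C\,M_{\alpha,\overline{\A}}f(x),
\end{equation*}
the last step using the standard embedding $\tfrac{1}{|B|}\int_B|f|\leq C\,\|f\|_{\overline{\A},B}$. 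A direct coordinatewise annular attack on $\|T_\alpha f_1(y)\|_X$ via $\mathscr{S}_{\alpha,\A,X}^{\dagger}$ alone is in principle possible but delicate: the small balls $B(y,2^{-j}r)$ produced by an annular decomposition around $y\in B$ need not contain the base point $x$ for large $j$, so their Luxemburg averages cannot be dominated uniformly in $j$ by $M_{\alpha,\overline{\A}}f(x)$. Hence the weak-type route, modelled on the scalar template of \cite{BLR} and upgraded to the vector-valued setting using $\mathscr{S}_{\alpha,\A,X}^{\dagger}$, is what I expect to be the pivotal step.
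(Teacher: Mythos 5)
Your proposal tracks the paper's own route: the paper reduces Theorem \ref{Int} to the fractional sharp maximal estimate $M_\delta^{\#}\|T_\alpha f\|_X\lesssim M_{\alpha,\overline{\A}}f$ stated in the lemma after Theorem \ref{teoPalpha} (whose proof is described as ``analogous to Lemma \ref{Sharp} and Theorem 3.6 in \cite{BLR}''), and then runs the Fefferman--Stein extrapolation exactly as in the proof of Theorem \ref{TeoImp}. Your tail estimate --- pushing the monotone $X$-norm inside the $(2^{j}r)^{n-\alpha}$-weighted annular sum to invoke $H_{\alpha,\A,X}^{\dagger}$ after peeling off $M_{\alpha,\overline{\A}}f(x)$ --- and your local estimate via weak $(1,n/(n-\alpha))$ boundedness plus Kolmogorov are precisely the BLR scheme upgraded by the vector-valued device of Lemma \ref{Sharp}, so the two arguments coincide; your explicit remark that the weak-type endpoint must be supplied from $\mathscr{S}_{\alpha,\A,X}^{\dagger}$ (since, unlike Theorem \ref{teoPalpha}, no $L^{p_0}_X\to L^{q_0}_X$ bound is assumed) makes visible a step the paper leaves implicit in its citation of \cite{BLR}.
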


\begin{teo}\label{teoPalpha}Let $0< \alpha < n$, $b \in BMO$ and $k \in \N$. Let $T_{\alpha}$ convolution operator with kernel $K_{\alpha} = \{ K_{\alpha,l} \}_{l \in \Z}$  such that $T_{\alpha}$ is bounded from $L^{p_0}_{X}(dx)$ to $L^{q_0}_{X}(dx)$, for some $1<p_0,q_0<\infty$. Let $\A$, $\B$ Young function such that  $\overline{\A}^{-1}(t) \B^{-1}(t) \overline{\mathcal{C}}_k^{-1}(t) \leq t$, with $\overline{\mathcal{C}}_{k}(t)=\exp(t^{1/k})$ for $t \geq 1$.
If $K_{\alpha} \in {\mathscr{S}}_{\alpha,\A, X}^{\dagger}\cap H^{\dagger}_{\alpha,\A,X} \cap H^{\dagger}_{\alpha,\B,X,k}$, then for any $0 < p < \infty$ and any $w \in A_{\infty}$, there exist $c>0$ such that
\begin{align*} \int_{\R} \| T_{\alpha,b}^kf\|^p_X w \leq C \|b\|^{pk}_{BMO}\int_{\R} (M_{\alpha,\overline{\A}}f)^p w, \quad f \in L_c^{\infty}(\R),\end{align*}
whenever the left-hand side is finite.
\end{teo}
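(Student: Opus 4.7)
The plan is to follow the same two-step strategy used for Theorem \ref{TeoP}: first establish a pointwise sharp maximal function estimate analogous to Lemma \ref{Sharp}(b), adapted to the fractional setting, and then deduce the weighted $L^p(w)$ bound via the Fefferman--Stein inequality together with an induction on $k$. The base case $k=0$ is precisely Theorem \ref{Int}, so only the iteration step must be worked out.

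For the sharp maximal estimate, I would fix $x \in \R$ and a ball $B$ containing $x$, set $\tilde B := 2B$, decompose $f = f_1 + f_2$ with $f_1 := f\chi_{\tilde B}$, and use the algebraic identity
\begin{equation*}
T_{\alpha,b}^k f(y) = T_\alpha\bigl((b_{\tilde B}-b)^k f\bigr)(y) + \sum_{m=0}^{k-1} C_{k,m}\,(b(y) - b_{\tilde B})^{k-m}\,T_{\alpha,b}^m f(y),
\end{equation*}
with $a := \|T_\alpha((b_{\tilde B}-b)^k f_2)(c_B)\|_X$. Proceeding exactly as in Lemma \ref{Sharp}, the oscillation $\bigl(\frac1{|B|}\int_B |\|T_{\alpha,b}^k f\|_X^\delta - |a|^\delta|\bigr)^{1/\delta}$ reduces to three pieces $I,II,III$. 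The piece $I$ coming from the sum over $m<k$ is handled by H\"older and the John--Nirenberg estimates \eqref{JN1}--\eqref{JN2} exactly as in \cite{LMRT}, producing the terms $\|b\|_{BMO}^{k-m} M_{\varepsilon}(\|T_{\alpha,b}^m f\|_X)(x)$. The local part $II$ is controlled by a fractional Kolmogorov-type inequality, obtained from the $L^{p_0}_X \to L^{q_0}_X$ boundedness of $T_\alpha$ together with the size assumption $K_\alpha \in {\mathscr{S}}_{\alpha,\A,X}^{\dagger}$, and then $(b_{\tilde B}-b)^k$ is peeled off via generalized H\"older with the triple $\overline{\A},\B,\overline{\mathcal{C}}_k$; this is what forces the appearance of $M_{\alpha,\overline{\A}}f(x)$ rather than $M_{\overline{\A}}f(x)$. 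The nonlocal term $III$ is the main one: I split the complement of $\tilde B$ into annuli $S_j := 2^{j+1}B \setminus 2^j B$, apply generalized H\"older in each annulus, and sum. The crucial change from the non-fractional argument is that the geometric weight $(2^jR)^n$ is replaced by $(2^jR)^{n-\alpha}$, with the remaining $(2^jR)^\alpha |B_j|^{-1}\int_{B_j}|f|$ absorbed into $M_{\alpha,\overline{\A}}f(x)$. Bringing the $X$-norm inside as in the proof of Lemma \ref{Sharp} (using monotonicity of $\|\cdot\|_X$), the hypotheses $K_\alpha \in H_{\alpha,\A,X}^{\dagger} \cap H_{\alpha,\B,X,k}^{\dagger}$ close the summation.

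This yields, for $0 < \delta < \varepsilon < 1$ sufficiently small,
\begin{equation*}
M_{\delta}^{\#}\bigl(\|T_{\alpha,b}^k f\|_X\bigr)(x) \leq C\sum_{m=0}^{k-1} \|b\|_{BMO}^{k-m} M_{\varepsilon}\bigl(\|T_{\alpha,b}^m f\|_X\bigr)(x) + C\|b\|_{BMO}^k M_{\alpha,\overline{\A}}f(x).
\end{equation*}
Theorem \ref{teoPalpha} then follows by induction on $k$. Given $0<p<\infty$ and $w \in A_\infty$, choose $\varepsilon$ with $\delta := p\varepsilon < 1$. Using $\|g\|_X^p \leq M_{\varepsilon}(\|g\|_X^{p})$ pointwise, the finiteness hypothesis on the left-hand side (guaranteed by $f \in L_c^\infty$ as in the proof of Theorem \ref{TeoImp}), the Fefferman--Stein inequality for $A_\infty$ weights, and the above sharp maximal bound, one finds
\begin{equation*}
\int_{\R} \|T_{\alpha,b}^k f\|_X^p w \leq C\sum_{m=0}^{k-1}\|b\|_{BMO}^{p(k-m)}\int_{\R}\bigl(M_{\varepsilon}(\|T_{\alpha,b}^m f\|_X)\bigr)^p w + C\|b\|_{BMO}^{pk}\int_{\R}\bigl(M_{\alpha,\overline{\A}}f\bigr)^p w.
\end{equation*}
The inductive hypothesis applied to each $m<k$, combined with standard $A_\infty$-bounds for $M_\varepsilon$, finishes the argument.

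The main obstacle is term $II$: one must verify carefully that the combination of the $L^{p_0}_X \to L^{q_0}_X$ boundedness of $T_\alpha$ with the size condition ${\mathscr{S}}_{\alpha,\A,X}^{\dagger}$ yields a fractional Kolmogorov-type inequality strong enough to produce $M_{\alpha,\overline{\A}}f(x)$ (rather than a composition of two maximal operators), and that the exponents in the generalized H\"older factorization through $\overline{\mathcal{C}}_k^{-1}$ match those used to handle the nonlocal term $III$ so that the hypothesis $\overline{\A}^{-1}(t)\B^{-1}(t)\overline{\mathcal{C}}_k^{-1}(t) \leq t$ suffices.
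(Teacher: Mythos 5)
Your proof follows essentially the same scheme the paper has in mind: establish a fractional analogue of Lemma \ref{Sharp}(b) and then apply the Fefferman--Stein inequality together with induction on $k$, exactly as in the passage from Lemma \ref{Sharp} to Theorem \ref{TeoP}. The paper itself only states the $k=0$ sharp maximal lemma and defers to \cite{BLR} for the rest, so your write-up fills in the intended details.

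Two points are worth being explicit about. First, the local piece $II$: in the non-fractional case Kolmogorov works because $T$ is weak $(1,1)$, while here one first has to deduce from the $L^{p_0}_X \to L^{q_0}_X$ boundedness together with the size and H\"ormander conditions a weak $(1,n/(n-\alpha))$ endpoint estimate for $T_\alpha$; Kolmogorov then yields the factor $|B|^{\alpha/n}\frac1{|B|}\int_{\widetilde B}|b_{\widetilde B}-b|^k|f|$, and a two-function H\"older with $\overline{\mathcal{C}}_k$ and its complement (not the triple factorization you mention) peels off the symbol, after which the condition $\overline{\A}^{-1}\B^{-1}\overline{\mathcal{C}}_k^{-1}\le t$ lets you dominate the resulting Luxemburg norm of $f$ by $\|f\|_{\overline{\A}}$. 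You correctly flagged this as the delicate step. Second, notice that relative to Theorem \ref{TeoP} the roles of $\A$ and $\B$ in the two H\"ormander conditions are interchanged in the statement of Theorem \ref{teoPalpha}; when you estimate the nonlocal piece $III$ you should apply two-function H\"older with $\B,\overline{\B}$ on the $j^k$ part (so that $H^{\dagger}_{\alpha,\B,X,k}$ closes the sum) and then use $\overline{\B}\lesssim\overline{\A}$, which follows from the hypothesis $\overline{\A}^{-1}\B^{-1}\overline{\mathcal{C}}_k^{-1}\le t$, to land on $M_{\alpha,\overline{\A}}f$. With these clarifications the argument is sound and is the one the paper intends.
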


\begin{obs}
The proof of this results are analogous to the ones in \cite{BLR} with the same changes of the results for the vector-valued singular integral operators above. Also for the proof of this results we need the following lemma and the proof is analogous to the Lemma \ref{Sharp} and the one Theorem 3.6 in \cite{BLR}.
\end{obs}

\begin{lema}
Let $\A$ be a Young function and $0< \alpha < n$. Let $T_{\alpha}f=K_{\alpha} \ast f$ with kernel $K_{\alpha} \in {\mathscr{S}}_{\alpha,\A, X}^{\dagger} \cap \, H_{\alpha,\A,X}^{\dagger}$, then for all $0 < \delta < \varepsilon < 1$ there exists $c>0$ such that
$$M_{\delta}^{\sharp} \| T_{\alpha}f \|_{\mathbf{x}} (x)= \left( M^{\sharp} \| T_{\alpha}f \|_{X}^{\delta} \right)^{\frac1{\delta}}(x) \leq c \; M_{\alpha, \overline{\A}} f(x),$$
for all $x \in \R$ and $f \in L_c^{\infty}$.
\end{lema}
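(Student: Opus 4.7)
The proof follows the scheme of Lemma \ref{Sharp}(a), modified for the fractional scaling as in the scalar argument of \cite{BLR}. Fix $x \in \R$ and a ball $B \ni x$ of radius $r_B$; set $\tilde B = 2B$, $c_B$ the center, and write $f = f_1 + f_2$ with $f_1 = f\chi_{\tilde B}$. Put $a := T_\alpha f_2(c_B) \in X$, which is well defined because $K_\alpha \in H_{\alpha,\A,X}^\dagger$ forces enough decay of $\|K_\alpha(\cdot)\|_X$ at infinity for the integral defining $a$ to converge absolutely. Since $0 < \delta < 1$, the elementary inequality $|s^\delta - t^\delta| \leq |s-t|^\delta$ applied to $s = \|T_\alpha f(y)\|_X$, $t = \|a\|_X$, together with $T_\alpha f = T_\alpha f_1 + T_\alpha f_2$ and Jensen, reduces the sharp-maximal bound to $I + II$, where
\begin{align*}
I &= \left(\frac{1}{|B|}\int_B \|T_\alpha f_1(y)\|_X^\delta\,dy\right)^{1/\delta},\\
II &= \left(\frac{1}{|B|}\int_B \|T_\alpha f_2(y) - T_\alpha f_2(c_B)\|_X^\delta\,dy\right)^{1/\delta}.
\end{align*}

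For $I$ I would use Kolmogorov's inequality. A standard Calder\'on--Zygmund decomposition, carried out coordinate-wise and reassembled through the monotone norm $\|\cdot\|_X$, shows that $\mathscr{S}_{\alpha,\A,X}^\dagger$ combined with $H_{\alpha,\A,X}^\dagger$ produces the vector-valued weak-type endpoint $T_\alpha : L^1 \to L^{n/(n-\alpha),\infty}_X$. Since $\delta < 1 < n/(n-\alpha)$, Kolmogorov's inequality applied to $T_\alpha f_1$ (supported and averaged over $\tilde B$) yields
\[
I \leq C\,|\tilde B|^{\alpha/n}\,\|f\|_{\overline{\A},\tilde B} \leq C\,M_{\alpha,\overline{\A}}f(x),
\]
the last step using $x \in \tilde B$ and the definition of $M_{\alpha,\overline{\A}}$.

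For $II$ the Hörmander condition $H_{\alpha,\A,X}^\dagger$ takes over. For $y \in B$, coordinate-wise Minkowski in the integral together with $\|\cdot\|_X$-monotonicity gives
\begin{align*}
\|T_\alpha f_2(y) - T_\alpha f_2(c_B)\|_X \leq \Bigl\|\Bigl\{\sum_{j\geq 1}\int_{S_j}|K_{\alpha,l}(y-z) - K_{\alpha,l}(c_B-z)|\,|f(z)|\,dz\Bigr\}_l\Bigr\|_X,
\end{align*}
where $S_j = \{z : |z-c_B| \sim 2^j r_B\}$ exhausts $\tilde B^c$. Generalized Hölder with the pair $(\A,\overline{\A})$ on each $S_j$, applied coordinate-wise, then the rewriting $|2^{j+1}B|\|f\|_{\overline{\A},2^{j+1}B} \sim (2^j r_B)^{n-\alpha}\cdot |2^{j+1}B|^{\alpha/n}\|f\|_{\overline{\A},2^{j+1}B} \leq C(2^j r_B)^{n-\alpha} M_{\alpha,\overline{\A}}f(x)$ (valid since $x \in 2^{j+1}B$), deliver
\[
II \leq C\,M_{\alpha,\overline{\A}}f(x)\,\Bigl\|\Bigl\{\sum_{j\geq 1}(2^j r_B)^{n-\alpha}\|K_{\alpha,l}(y-\cdot) - K_{\alpha,l}(c_B-\cdot)\|_{\A,|\cdot - c_B|\sim 2^j r_B}\Bigr\}_l\Bigr\|_X.
\]
Translating the annuli by $c_B$ turns the remaining inner $X$-norm into the left-hand side of the $H_{\alpha,\A,X}^\dagger$ inequality at the point $y-c_B$ and scale $R \sim r_B$ (the restriction $R > c_\A|y-c_B|$ being ensured after absorbing $c_\A$ into the starting scale, since $|y-c_B| < r_B$). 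Hence this $X$-norm is bounded by $C_\A$, giving $II \leq C\,M_{\alpha,\overline{\A}}f(x)$.

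Taking the supremum over balls $B \ni x$ and raising to the $1/\delta$-th power yields the claim. The main obstacle is the treatment of $I$: without a pointwise control of $\|K_\alpha(\cdot)\|_X$, a naive Hedberg-type annular decomposition built from $\mathscr{S}_{\alpha,\A,X}^\dagger$ produces one copy of $M_{\alpha,\overline{\A}}f(y)$ per annulus with no geometric factor, and the resulting series diverges. The cleanest workaround, mirroring the scalar argument of \cite{BLR}, is to first extract the weak-type endpoint for $T_\alpha$ from the two hypotheses and then invoke Kolmogorov's inequality, which is the step that genuinely differs from the non-fractional Lemma \ref{Sharp}(a).
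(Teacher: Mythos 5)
Your decomposition into $I$ and $II$, the choice $a = T_\alpha f_2(c_B)$, the use of $|s^\delta - t^\delta|\le|s-t|^\delta$, and the estimate of $II$ via coordinate-wise generalized H\"older followed by the monotone $X$-norm and the $H_{\alpha,\A,X}^\dagger$ condition are all correct, and they mirror the route the paper has in mind (the paper gives no proof, only a pointer to Lemma \ref{Sharp} and to Theorem 3.6 of \cite{BLR}). Your closing observation that a naive annular decomposition of $I$ using only $\mathscr{S}_{\alpha,\A,X}^\dagger$ produces one copy of $M_{\alpha,\overline{\A}}f$ per annulus with no geometric gain, and hence diverges, is accurate and well spotted; Kolmogorov's inequality with the fractional weak-type endpoint $T_\alpha\colon L^1\to L^{n/(n-\alpha),\infty}_X$ is indeed the natural way to close the gap, in exact analogy with the Remark on Kolmogorov for the non-fractional case.

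The soft spot, which you state in one sentence and do not establish, is the weak-type endpoint itself. In the non-fractional setting the weak $(1,1)$ bound is \emph{not} a consequence of the kernel conditions alone: the paper's definition of a vector-valued singular integral operator builds in strong $(p_0,p_0)$ boundedness, and weak $(1,1)$ then follows from Calder\'on--Zygmund theory. In the fractional lemma under discussion no boundedness hypothesis appears, while the commutator Theorem \ref{teoPalpha} \emph{does} explicitly assume $T_\alpha\colon L^{p_0}_X\to L^{q_0}_X$. So ``$\mathscr{S}^\dagger$ combined with $H^\dagger$ produces the weak-type endpoint'' is either borrowing the boundedness hypothesis that is missing from the lemma's statement, or it is a claim that requires its own argument (neither a pointwise size bound nor $L^{p_0}\to L^{q_0}$ boundedness follows obviously from the $\dagger$ size condition, which only controls the $X$-norm of the sequence of Luxemburg norms and not $\|K_\alpha(\cdot)\|_X$). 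You should flag this as a hypothesis you are importing from Theorem \ref{teoPalpha}/\cite{BLR} rather than a consequence of the stated assumptions. One smaller inaccuracy: the well-definedness of $a = T_\alpha f_2(c_B)$ comes from local integrability of $K_\alpha$ away from the origin (via $\mathscr{S}^\dagger_{\alpha,\A,X}$) together with $f\in L_c^\infty$, not from $H^\dagger_{\alpha,\A,X}$, which controls differences of the kernel rather than its size.
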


There exist  relations between the kernels which satisfies the fractional conditions ${\mathscr{S}}_{\alpha,\A, X}^{\dagger}$ and $ H^{\dagger}_{\alpha,\A,X,k}$ and the kernels which satisfies the conditions ${\mathscr{S}}_{\A, X}^{\dagger}$ and $ H^{\dagger}_{\A,X,k}$. The next proposition show this relation and also a form to define kernels such that satisfies the fractional condition. The proof is analogous to Proposition 4.1 in \cite{BLR}.

\begin{prop}\label{propSalpha}
Let $K = \{ K_l \}_{l \in \Z}$ and $K_{\alpha} = \{ K_{\alpha,l} \}_{l \in \Z}$ defined by $K_{\alpha}(x)=|x|^{\alpha}K(x)$.
If $K \in {\mathscr{S}}_{\A, X}^{\dagger} \cap H^{\dagger}_{\A,X,k}$ then $K_{\alpha} \in  {\mathscr{S}}_{\alpha,\A, X}^{\dagger} \cap H^{\dagger}_{\alpha,\A,X,k}$.
\end{prop}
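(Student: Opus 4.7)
The plan is to verify the two conditions defining $\mathscr{S}^{\dagger}_{\alpha,\A,X}\cap H^{\dagger}_{\alpha,\A,X,k}$ by reducing each to the corresponding hypothesis on $K$, through the pointwise identity $K_{\alpha,l}(y)=|y|^{\alpha}K_l(y)$.

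\emph{Size condition.} On the annulus $|y|\sim s$ one has $|y|^{\alpha}\leq (2s)^{\alpha}$, hence $|K_{\alpha,l}(y)|\leq (2s)^{\alpha}|K_l(y)|$. The factor $(2s)^{\alpha}$ is constant in $y$ and in $l$, so it pulls out of both the Luxemburg $\A$-average and the $X$-norm. The hypothesis $K\in\mathscr{S}^{\dagger}_{\A,X}$ then yields $\|\{\|K_{\alpha,l}\|_{\A,|y|\sim s}\}_l\|_X\leq C(2s)^{\alpha}s^{-n}\leq Cs^{\alpha-n}$, as required.

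\emph{H\"ormander condition.} I would split
\begin{equation*}
K_{\alpha,l}(y-x)-K_{\alpha,l}(y)=|y-x|^{\alpha}\bigl(K_l(y-x)-K_l(y)\bigr)+\bigl(|y-x|^{\alpha}-|y|^{\alpha}\bigr)K_l(y).
\end{equation*}
Since $R>c_{\A}|x|$ with $c_{\A}>1$, for $y$ with $|y|\sim 2^mR$ and $m\geq 1$ both $|y|$ and $|y-x|$ are comparable to $2^mR$. Therefore $|y-x|^{\alpha}\leq C(2^mR)^{\alpha}$, and the mean value theorem applied to $t\mapsto t^{\alpha}$ together with $\bigl||y-x|-|y|\bigr|\leq |x|$ gives
\begin{equation*}
\bigl||y-x|^{\alpha}-|y|^{\alpha}\bigr|\leq C(2^mR)^{\alpha-1}|x|,
\end{equation*}
the constant depending only on $\alpha$ because the intermediate value from MVT lies in an interval of length comparable to $2^mR$.

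Taking the Luxemburg $\A$-average on $|y|\sim 2^mR$, multiplying by $(2^mR)^{n-\alpha}m^k$, summing over $m\geq 1$ and applying $\|\cdot\|_X$ in $l$, the first piece reproduces $(2^mR)^{n}m^k\|K_l(\cdot-x)-K_l(\cdot)\|_{\A,|y|\sim 2^mR}$, which is bounded by the hypothesis $K\in H^{\dagger}_{\A,X,k}$. For the second piece, the triangle inequality for $\|\cdot\|_X$ lets one exchange sum and norm, and then $K\in\mathscr{S}^{\dagger}_{\A,X}$ yields
\begin{equation*}
|x|\sum_{m\geq 1}(2^mR)^{n-1}m^k\bigl\|\bigl\{\|K_l\|_{\A,|y|\sim 2^mR}\bigr\}_l\bigr\|_X\leq C\frac{|x|}{R}\sum_{m\geq 1}\frac{m^k}{2^m},
\end{equation*}
which is a finite constant because $|x|/R<1/c_{\A}$ and the series converges. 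Summing both estimates establishes $K_{\alpha}\in H^{\dagger}_{\alpha,\A,X,k}$.

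The argument is essentially routine; the only step requiring mild care is the mean value estimate, which must hold uniformly in the sign of $\alpha-1$. This is the point where the separation condition $R>c_{\A}|x|$ is used decisively, since it forces both $|y|$ and $|y-x|$ to be of order $2^mR$ throughout the annulus.
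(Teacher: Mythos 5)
Your proof is correct and follows the standard route: split $K_{\alpha,l}(y-x)-K_{\alpha,l}(y)$ into $|y-x|^{\alpha}\bigl(K_l(y-x)-K_l(y)\bigr)$ plus $\bigl(|y-x|^{\alpha}-|y|^{\alpha}\bigr)K_l(y)$, absorb the first factor into the $H^{\dagger}_{\A,X,k}$ hypothesis and control the second via the mean value theorem and the size condition $\mathscr{S}^{\dagger}_{\A,X}$, using $R>c_{\A}|x|$ to keep $|y|$ and $|y-x|$ comparable to $2^mR$ on each annulus. The paper does not spell out a proof but refers to Proposition 4.1 of \cite{BLR}, whose argument is exactly this decomposition, so your approach is essentially the same.
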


We know that, for certain $X$ Banach space, the kernel of the square operator satisfies the conditions ${\mathscr{S}}_{\A,X}^{\dagger}$ and $ H^{\dagger}_{\A,X,k}$, for example $X=l^p$ and $\A(t)=\exp^{\frac1{1+k}}-1$, for more examples see Section 4. Now we can define the fractional square operator,

$$S_{\alpha,X}f(x):=\|\tilde{T}_{\alpha}f(x)\|_{X}=\left\|\left\{\int_{\mathbb{R}} |x-y|^{\alpha}K_l(x-y)f(y)dy\right\}\right\|_{X},$$
where $K=\{K_l\}_{l\in \Z}$ is the kernel defined in the Section 4. Let $b \in BMO$ and  $k \in \N $, the commutator is defined by
$$S_{\alpha,X, b}^kf(x):=\|\tilde{T}_{\alpha,b}^kf(x)\|_{X}=\left\|\left\{ \int_{R}(b(x) - b(y))^k |x-y|^{\alpha} K_l(x-y)f(y) dy \right\}_{l \in \Z}\right\|_{X}.$$

By Proposition \ref{propSalpha}, we have that $S_{\alpha,X}f(x)$ satisfies the hypothesis of Theorem \ref{teoPalpha}. Then,  Theorem \ref{Teo37} for the fractional square operator is

\begin{teo}\label{Sgralalpha} Let $b \in BMO$,  $k \in \N \cup \{0\}$ and $0<\alpha<n$. Let $\A(t)=\exp(t^{\frac1{1+k}})-1$. If
$K \in {\mathscr{S}}_{\A, X}^{\dagger} \cap H_{\A,X,k}^{\dagger}$ i.e. \begin{equation*}\label{condAE3}
  \left\Vert \left\lbrace  \frac1{m} \right\rbrace_{m \in (\Z-\{0\})} \right\Vert_{X}=C_{\A,X}<\infty,
\end{equation*}
then, for any $0<p<\infty$ and $w \in A_{\infty}$, there exists $C$ such that
$$
\begin{aligned}
\int_{\R} \vert S_{\alpha,X, b}^k f(x) \vert^{p} w(x) dx
%= \int_{\R} \Vert \tilde T_b^kf(x) \Vert_{\mathcal{E}}^{p} \, w(x) dx\\
 \leqslant C \int_{\R} \left(M_{\alpha, L \log L^{k+1}}f (x) \right)^{p} w(x)dx.
\end{aligned}
$$
%Futhermore, for all  $w \in A_{\infty}$ and $\lambda >0$,
%$$
%w\{x \in \R : \vert S_{\mathcal{E}, b}^k f(x) \vert> \lambda\} \leq c \int_{\R}\varphi_k\left( \frac{\|b\|_{BMO}^{k}|f(x)|}{\lambda}\right)Mw(x) dx,
%$$
%where $\varphi_k(t)=t(1 + log(t))^{k+1}$.
\end{teo}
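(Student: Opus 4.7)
The plan is to reduce the theorem to a direct application of Theorem~\ref{teoPalpha} (when $k \geq 1$), or Theorem~\ref{Int} (when $k = 0$), for the vector-valued convolution operator $\tilde T_\alpha$ with kernel $K_\alpha(z) = |z|^\alpha K(z)$, where $K = \{K_l\}_{l \in \Z}$ is the square-operator kernel of Section~4. Note that $S_{\alpha,X,b}^k f = \|\tilde T_{\alpha,b}^k f\|_X$ by construction. The two Young functions involved are the one from the statement $\A(t) = \exp(t^{1/(1+k)}) - 1$, playing the role of the ``main'' Young function, together with the auxiliary function $\B(t) = \exp(t) - 1$; these satisfy the pointwise bound $\overline{\A}^{-1}(t)\B^{-1}(t)\overline{\mathcal{C}}_k^{-1}(t) \leq t$ for $t \geq 1$, as already recorded in the paragraph preceding Theorem~\ref{Teo37}.

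First I would transfer the assumed conditions $K \in \mathscr{S}_{\A,X}^\dagger \cap H_{\A,X,k}^\dagger$ to $K_\alpha$ via Proposition~\ref{propSalpha}, which yields $K_\alpha \in \mathscr{S}_{\alpha,\A,X}^\dagger \cap H_{\alpha,\A,X,k}^\dagger$. The remaining zero-th order H\"ormander condition with $\B$ also follows from the hypothesis $\|\{1/m\}\|_X < \infty$: by the characterization \eqref{condp3} applied with $\B$ and $k=0$, and using $\B^{-1}(2^m \cdot 8) \asymp m$, one gets $K \in H_{\B,X}^\dagger$, and Proposition~\ref{propSalpha} again promotes this to $K_\alpha \in H_{\alpha,\B,X}^\dagger$. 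Together, the three conditions required by Theorem~\ref{teoPalpha} on the fractional kernel are verified.

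The remaining hypothesis of Theorem~\ref{teoPalpha} is that $\tilde T_\alpha$ maps $L^{p_0}_X(dx) \to L^{q_0}_X(dx)$ boundedly for some $1 < p_0, q_0 < \infty$. This is the main technical obstacle, because the $\mathscr{S}_{\alpha,\A,X}^\dagger$ condition only supplies size information. I would verify it by unpacking the explicit form of $K_{\alpha,l} = |\cdot|^\alpha K_l$, proceeding as in the classical fractional integral case: split $f$ into annular pieces and sum the contributions, exploiting the monotonicity of $\|\cdot\|_X$ to obtain a pointwise bound of the form $\|\tilde T_\alpha f(x)\|_X \leq c\, I_\alpha(|f|)(x)$ (with $I_\alpha$ the usual Riesz potential) and then invoking the Hardy-Littlewood-Sobolev inequality for a fixed pair $(p_0, q_0)$.

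With all hypotheses in place, Theorem~\ref{teoPalpha} gives, for every $0 < p < \infty$ and $w \in A_\infty$,
\begin{equation*}
\int_{\R} \|\tilde T_{\alpha,b}^k f(x)\|_X^p\, w(x)\,dx \;\leq\; C\,\|b\|_{BMO}^{pk}\int_{\R} \left(M_{\alpha,\overline{\A}}f(x)\right)^p w(x)\,dx,
\end{equation*}
whenever the left-hand side is finite. Since $\A^{-1}(t) \asymp (\log(1+t))^{k+1}$, the complementary function satisfies $\overline{\A}(t) \asymp t(1+\log t)^{k+1}$, so $M_{\alpha,\overline{\A}}$ coincides up to constants with $M_{\alpha,L\log L^{k+1}}$. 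Absorbing $\|b\|_{BMO}^{pk}$ into the constant $C$ yields the stated inequality. The case $k=0$ follows by the same scheme, using Theorem~\ref{Int} in place of Theorem~\ref{teoPalpha} and dropping all $\B$-related items.
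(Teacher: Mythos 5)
Your proof follows the paper's own route: transfer the size and H\"ormander conditions from $K$ to $K_\alpha$ via Proposition~\ref{propSalpha}, apply Theorem~\ref{teoPalpha} (or Theorem~\ref{Int} when $k=0$), and identify $\overline{\A}$ with $L\log L^{k+1}$. The paper's proof is only a two-sentence appeal to these ingredients, so you have in fact filled in several details it glosses over. You derive the missing order-zero H\"ormander condition with $\B(t)=e^t-1$ directly from the characterization \eqref{condp3}, which is cleaner than relying on the blanket inclusion $H^\dagger_{\A,X,k}\subset H^\dagger_{\B,X}$ asserted before Theorem~\ref{Teo37} (an inclusion that is not obvious for general kernels). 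You tacitly correct what looks like a typo in Theorem~\ref{teoPalpha}, whose printed hypotheses $H^\dagger_{\alpha,\A,X}\cap H^\dagger_{\alpha,\B,X,k}$ have $\A$ and $\B$ in roles opposite to those in Lemma~\ref{Sharp} and Theorem~\ref{TeoP}. You also supply the $L^{p_0}_X\to L^{q_0}_X$ boundedness of $\tilde T_\alpha$ via the pointwise bound $\|\tilde T_\alpha f\|_X\lesssim I_\alpha|f|$, a hypothesis the paper never addresses; this works because for $2^{j-1}\le|z|<2^j$ one has $|K_l(z)|=2^{-l-1}$ for $l\ge j$ and $0$ otherwise, so $\|\{K_{\alpha,l}(z)\}_l\|_X\approx|z|^\alpha 2^{-j}\|\{2^{-m}\}_{m\ge0}\|_X\lesssim|z|^{\alpha-1}$, the $X$-norm being finite since $2^{-m}\lesssim1/(m+1)$ and $\|\{1/m\}\|_X<\infty$ by the monotonicity of $\|\cdot\|_X$. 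The one step you leave implicit is that applying Proposition~\ref{propSalpha} with $\B$ also requires $K\in\mathscr{S}^\dagger_{\B,X}$, not merely $K\in H^\dagger_{\B,X}$; but for this kernel $K\in\mathscr{S}^\dagger_{\A,X}\Leftrightarrow K\in\mathscr{S}^\dagger_{\B,X}$ (both reduce to $\|\{2^{-m}\}_{m\ge0}\|_X<\infty$, the Young function entering only through the harmless constant $1/\A^{-1}(2)$), so this is easily patched. Overall the reduction is sound.
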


In \cite{BDP}, the authors  study the weights for fractional maximal operator related to Young function in the context of variable Lebesgue spaces. They characterized the weights for the boundedness of $M_{\alpha,\A}$ with $\A(t)=t^r(1+\log(t))^{\beta}$, $r\geq 1$ and $\beta \geq 0$.

For any $1\leq p,q <\infty$, we define the $A_{p,q}$ weight class by,
$w\in A_{p,q}$ if and only if $w^q\in A_{1+\frac{q}{p'}}$

The result in the classical Lebesgue spaces, that is the variable Lebesgue spaces with constant exponent, is the following,

\begin{teoA}\cite{BDP}
Let $w$ be a weight, $0<\alpha<n$, $1<p<n/\alpha$, and $1/q=1/p-\alpha/n$. Let $\A(t)=t^r(1+\log(t))^{\beta}$, with $1\leq r < p$ and $\beta\geq 0$. Then $M_{\alpha, \A}$ is bounded from $L^p(w^p)$ into $L^q(w^q)$ if and only if $w^r \in A_{p/r,q/r}$.
\end{teoA}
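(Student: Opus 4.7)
The plan is to reduce the weighted Orlicz fractional maximal inequality to the classical two-weight Muckenhoupt--Wheeden inequality for the ordinary fractional maximal, by a rescaling in the Luxemburg norm, and then close the argument using the self-improvement (reverse H\"older) of $A_{P,Q}$ weights.

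The starting observation is the pointwise equivalence, up to universal constants,
\begin{equation*}
\bigl(M_{\alpha,\A}f(x)\bigr)^{r}\;\approx\;M_{\alpha r,\,\Psi}\bigl(|f|^{r}\bigr)(x),\qquad \Psi(t):=t(1+\log t)^{\beta},
\end{equation*}
which I would verify by the change of variable $s=t^{r}$ in the definition of the Luxemburg norm: since $\A(s^{1/r})\approx r^{-\beta}\Psi(s)$ for $s$ large, one has $\|f\|_{\A,B}^{r}\approx\||f|^{r}\|_{\Psi,B}$, and multiplying by $|B|^{\alpha/n}$ and taking $\sup_{B\ni x}$ gives the identity. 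Setting $P=p/r$, $Q=q/r$, $A=\alpha r$ and $v=w^{r}$ (so that $v^{P}=w^{p}$, $v^{Q}=w^{q}$ and $1/Q=1/P-A/n$), the target inequality is equivalent to
\begin{equation*}
M_{A,\Psi}\colon L^{P}(v^{P})\longrightarrow L^{Q}(v^{Q}),
\end{equation*}
and the hypothesis $w^{r}\in A_{p/r,q/r}$ reads exactly $v\in A_{P,Q}$. It therefore suffices to establish this two-weight characterization for the $L(\log L)^{\beta}$ fractional maximal.

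Necessity is routine: since $\Psi(t)\geq t$, one has $M_{A,\Psi}f\geq c\,M_{A}f$ pointwise, and plugging the test function $f=\chi_{B}v^{-QP'/P}$ into the weighted inequality recovers the $A_{P,Q}$ testing condition on $v$ in the standard way. For sufficiency, which is the main step, I would invoke the reverse H\"older self-improvement: there exists $\delta>0$, depending only on $[v]_{A_{P,Q}}$, such that $v\in A_{P-\delta,Q_{\delta}}$ with $1/Q_{\delta}=1/(P-\delta)-A/n$. Since $\Psi(t)\lesssim t^{1+\varepsilon}$ for $t$ large and any $\varepsilon>0$, a truncation $f=f\chi_{\{|f|>1\}}+f\chi_{\{|f|\leq 1\}}$ combined with the generalized H\"older inequality in Luxemburg norms yields
\begin{equation*}
M_{A,\Psi}f(x)\;\lesssim\;\bigl(M_{A(1+\varepsilon)}(|f|^{1+\varepsilon})(x)\bigr)^{1/(1+\varepsilon)}+M_{A}f(x),
\end{equation*}
and for $\varepsilon$ sufficiently small (so that $v^{1+\varepsilon}\in A_{P/(1+\varepsilon),Q/(1+\varepsilon)}$, which is exactly what $v\in A_{P-\delta,Q_{\delta}}$ provides) the classical fractional Muckenhoupt--Wheeden theorem applied to both terms closes the bound.

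The main obstacle is the coordination of the reverse H\"older exponent $\delta$ with the Orlicz excess $\varepsilon$, and the careful handling of the low-density piece $\{|f|\leq 1\}$ where the Luxemburg $\Psi$-average behaves like an $L^{1}$ average rather than an $L^{1+\varepsilon}$ one; this is precisely why the hypothesis $r<p$ (equivalently $P>1$) is essential. The cleanest alternative, which is essentially the route taken in \cite{BDP}, is to invoke P\'erez's general Orlicz Muckenhoupt theorem: the two-weight boundedness $M_{A,\Psi}\colon L^{P}(v^{P})\to L^{Q}(v^{Q})$ is equivalent to an Orlicz $B_{P,Q}$-type integrability condition on $\overline{\Psi}$, and for $\Psi(t)=t(1+\log t)^{\beta}$ with $P>1$ this condition collapses exactly to $v\in A_{P,Q}$ via the reverse H\"older self-improvement of the weight.
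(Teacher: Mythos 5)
The statement is quoted verbatim from \cite{BDP}; the present paper uses it as a black box and does not supply a proof, so there is no internal argument to compare against. Your proposal is therefore a self-contained alternative, and its main reduction is correct: writing $\lambda=\mu^{1/r}$ in the Luxemburg norm gives $r^{-\beta}\Psi(s)\leq\A(s^{1/r})\leq\Psi(s)$ with $\Psi(t)=t(1+\log^+ t)^\beta$, hence $\|f\|_{\A,B}^r\approx\||f|^r\|_{\Psi,B}$; passing to $P=p/r$, $Q=q/r$, $A=\alpha r$, $v=w^r$ preserves $1/Q=1/P-A/n$ and turns the claim into the two-weight bound $M_{A,\Psi}\colon L^P(v^P)\to L^Q(v^Q)$ for $v\in A_{P,Q}$, $P>1$. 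Necessity via $\Psi(t)\geq t$, hence $M_{A,\Psi}\gtrsim M_A$ pointwise, together with the necessity half of the Muckenhoupt--Wheeden characterization of $M_A$, is also fine.

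For sufficiency two details should be tightened. First, the pointwise bound $M_{A,\Psi}f\lesssim\bigl(M_{A(1+\varepsilon)}(|f|^{1+\varepsilon})\bigr)^{1/(1+\varepsilon)}+M_A f$ does not need any truncation of $f$: it follows directly from $\Psi(t)\leq t+C_\varepsilon t^{1+\varepsilon}$ and the elementary fact that $\Psi\lesssim\Phi_1+\Phi_2$ implies $\|f\|_{\Psi,B}\lesssim\|f\|_{\Phi_1,B}+\|f\|_{\Phi_2,B}$ (scale each Luxemburg norm by $2$ and use convexity). Second, and more substantively, the self-improvement step is stated loosely. What you actually need is $v^{1+\varepsilon}\in A_{P/(1+\varepsilon),Q/(1+\varepsilon)}$, which, via $u\in A_{\widetilde{P},\widetilde{Q}}\Leftrightarrow u^{\widetilde{Q}}\in A_{1+\widetilde{Q}/\widetilde{P}'}$, is equivalent to $v^Q\in A_{\tau(\varepsilon)}$ with $\tau(\varepsilon)=1+Q(P-1-\varepsilon)/((1+\varepsilon)P)<1+Q/P'$. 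This does hold for small $\varepsilon$ by openness of the Muckenhoupt classes applied to $v^Q$, but it is not literally the $A_{P-\delta,Q_\delta}$ condition you invoke, and the two formulations should not be identified without a line of justification. With that bookkeeping repaired, the argument closes by applying Muckenhoupt--Wheeden to both $M_{A(1+\varepsilon)}$ and $M_A$. Your closing remark is accurate: \cite{BDP} obtain the theorem from P\'erez's general Orlicz two-weight framework (a $B_{p,q}$-type integrability condition on $\overline{\Psi}$), whereas your route is a more elementary power-bump argument; both deliver the same conclusion, and the hypothesis $r<p$ (i.e.\ $P>1$) is precisely what makes the openness step available.
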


Applying this result to Theorem \ref{Sgralalpha} we obtain that, if $w \in A_{p,q}$ then for all $1<p<n/\alpha$, and $1/q=1/p-\alpha/n$,
\begin{align*}
\int_{\R} | S_{\alpha,X, b}^k f(x)|^{q} w^q(x) dx & \leq c \int_{\R} \left(M_{\alpha, L \log L^{k+1}}f(x) \right)^{q} w^q(x)dx\\
&\leq c \int_{\R} |f (x)|^{p} w^p(x)dx;
\end{align*}

So we have the following results,

\begin{corol}
Let $0<\alpha<1$, $1<p<1/\alpha$ and $1/q=1/p-\alpha$. If $w\in A_{p,q}$ then $S_{\alpha,X, b}^k$ is bounded from $L^p(w^p)$ into $L^q(w^q)$.
\end{corol}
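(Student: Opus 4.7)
The plan is to chain together two ingredients already established in the paper: the Coifman-type control from Theorem \ref{Sgralalpha} and the two-weight norm inequality for the fractional Orlicz maximal operator recalled from \cite{BDP}. Since the ambient dimension is $n=1$ (the operator acts on $\mathbb{R}$), the hypotheses $0<\alpha<1$ and $1<p<1/\alpha$ are exactly the fractional hypotheses $0<\alpha<n$ and $1<p<n/\alpha$ needed below.

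First I would verify that the weight assumptions are compatible with both tools. By definition $w\in A_{p,q}$ means $w^q\in A_{1+q/p'}$, hence in particular $w^q\in A_\infty$; this is precisely what Theorem \ref{Sgralalpha} requires of the weight on the left-hand side. Applying Theorem \ref{Sgralalpha} with Young function $\A(t)=\exp(t^{1/(1+k)})-1$ and the weight $w^q$, with the exponent $q$ in place of the generic $p$ there, yields
\begin{equation*}
\int_{\mathbb{R}} |S_{\alpha,X,b}^k f(x)|^q\, w^q(x)\, dx \leq C \int_{\mathbb{R}} \bigl( M_{\alpha,L\log L^{k+1}} f(x) \bigr)^q w^q(x)\, dx,
\end{equation*}
valid whenever the left-hand side is finite, since the corollary in Theorem \ref{Sgralalpha} identifies $\overline{\A}(t)=t(1+\log(t))^{k+1}$.

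Next I would invoke the two-weight theorem from \cite{BDP} cited just above the statement with the choices $r=1$ and $\beta=k+1$, so that $\A_{\mathrm{BDP}}(t)=t(1+\log(t))^{k+1}$, which is exactly the Young function defining $M_{\alpha,L\log L^{k+1}}$. Since $r=1<p$ by assumption and $w^1=w\in A_{p,q}=A_{p/1,q/1}$, that theorem gives
\begin{equation*}
\int_{\mathbb{R}} \bigl( M_{\alpha,L\log L^{k+1}} f(x) \bigr)^q w^q(x)\, dx \leq C \int_{\mathbb{R}} |f(x)|^p w^p(x)\, dx.
\end{equation*}
Combining the two displayed inequalities gives the desired boundedness of $S_{\alpha,X,b}^k$ from $L^p(w^p)$ to $L^q(w^q)$, with the usual density/finiteness argument to remove the a~priori assumption that the left-hand side is finite (approximating by truncations of $f\in L^p(w^p)$ in $C_c^\infty$).

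There is no real obstacle here, so the only careful point is bookkeeping: matching exponents and weights correctly in Theorem \ref{Sgralalpha} (where the conclusion is stated with a generic exponent $p$ and a generic $A_\infty$ weight, so I must apply it with exponent $q$ and weight $w^q$), and checking that the Young function produced by the abstract corollary $\overline{\A}(t)=t(1+\log t)^{k+1}$ is literally the $\A(t)=t^r(1+\log t)^\beta$ in the \cite{BDP} statement with $r=1$, $\beta=k+1$, which then requires only the mild $r<p$ to apply the weighted bound.
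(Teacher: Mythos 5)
Your proof is correct and follows essentially the same route the paper takes: verify $w\in A_{p,q}$ implies $w^q\in A_\infty$, apply Theorem~\ref{Sgralalpha} with exponent $q$ and weight $w^q$, then apply the two-weight result from \cite{BDP} with $r=1$ and $\beta=k+1$ (using $w\in A_{p,q}=A_{p/1,q/1}$), and chain the two displayed estimates. The bookkeeping you flag — exponent $q$ vs.\ generic $p$, $n=1$, matching $\overline{\A}(t)=t(1+\log t)^{k+1}$ with the $L\log L^{k+1}$ maximal function — is exactly the content, and your closing remark on the a~priori finiteness of the left-hand side is a sensible explicit addition that the paper leaves implicit.
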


\end{document}